\newtheorem{theorem}{Theorem}[section]
\newtheorem{lemma}[theorem]{Lemma}
\theoremstyle{definition}
\newtheorem{definition}[theorem]{Definition}
\newtheorem{remark}[theorem]{Remark} 
\theoremstyle{corollary}
\newtheorem{corollary}[theorem]{Corollary}
\theoremstyle{proposition}
\newtheorem{proposition}[theorem]{Proposition}
\theoremstyle{conjecture}
\newtheorem{conjecture}[theorem]{Conjecture}
\theoremstyle{conditionalproposition}
\numberwithin{equation}{section}
\begin{document}

\title[Eigenvalue contour lines of Kac--Murdock--Szeg\H{o} matrices]{ Eigenvalue contour lines of Kac--Murdock--Szeg\H{o} matrices with a complex parameter}

\author{George Fikioris and Christos Papapanos}

\address{School of Electrical and Computer engineering, National Technical University of Athens, GR 157-73 Zografou, Athens, Greece. Christos Papapanos is also with the Department of Electrical Engineering and Computer Sciences, University of California, Berkeley, CA, USA.
}
\email{gfiki@ece.ntua.gr, christos\_papapanos@berkeley.edu}

\date{\today}

\keywords{Toeplitz matrix, Kac--Murdock--Szeg\H{o} matrix, eigenvalues, contour lines, level curves, curve winding numbers}

\maketitle

\begin{abstract}
A previous paper studied the so-called borderline curves of the Kac--Murdock--Szeg\H{o} matrix $K_{n}(\rho)=\left[\rho^{|j-k|}\right]_{j,k=1}^{n}$, where $\rho\in\mathbb{C}$. These are the level curves (contour lines) in the complex-$\rho$ plane on which $K_n(\rho)$ has a type-1 or type-2 eigenvalue of magnitude $n$, where $n$ is the matrix dimension. Those curves have cusps at all critical points $\rho=\rho_c$ at which multiple (double) eigenvalues occur. The present paper determines corresponding curves pertaining to eigenvalues of magnitude $N\ne n$. We find that these curves no longer present cusps; and that, when $N<n$, the cusps have in a sense transformed into loops. We discuss the meaning of the winding numbers of our curves. Finally, we point out possible extensions to more general matrices.
\end{abstract}

\section{Introduction and Preliminaries}

This is the fourth in a series of papers \cite{Fik2018,FikMavr,Fik2020} dealing with the eigenvalues of the Toeplitz matrix 

\begin{equation}\label{matrixdefinition}
K_{n}(\rho)=\left[\rho^{|j-k|}\right]_{j,k=1}^{n}=\begin{bmatrix}
1 & \rho & \rho^2 & \ldots & \rho^{n-1} \\
\rho & 1 & \rho & \ldots & \rho^{n-2} \\
\rho^2 & \rho & 1 & \ldots & \rho^{n-3} \\
\vdots & \vdots & \vdots & \ddots & \vdots \\
\rho^{n-1} & \rho^{n-2} & \rho^{n-3} & \ldots & 1
\end{bmatrix}
\end{equation}
in which $n=3,4,5,\ldots$ and $\rho \in \mathbb{C}$. In the special case $0<\rho<1$, $K_n(\rho)$ is usually called the Kac--Murdock--Szeg\H{o} matrix. Ref. \cite{Fik2018} discusses its history and gives a number of applications. The matrix $K_n(\rho)$ belongs to a number of frequently-investigated classes of matrices. For any fixed $\rho\in\mathbb{C}$, for example, $K_n(\rho)$ is complex-symmetric; for any fixed $\rho\in\mathbb{C}\setminus\mathbb{R}$ it is non-Hermitian and nonnormal; and for any fixed $\rho\in\mathbb{C}$ with $|\rho|>1$, the corresponding Laurent matrix (doubly infinite Toeplitz matrix) does not have a well-defined and bounded symbol, thus complicating studies of the spectral behavior of $K_n(\rho)$ \cite{Fik2018, Fik2020, Bottcher-increasing}.  Furthermore, the matrix elements are complex-analytic functions of $\rho$, meaning that $K_n(\rho)$ belongs to the class of analytic matrix functions \cite{Fik2020}. As noted in \cite{Fik2018} and \cite{Fik2020}, and as will be further suggested in the present work, 
studies of $K_n(\rho)$ have initiated investigations pertaining to
more general matrices, and have served as means of illustrating the results of such investigations.
This has been true ever since the original work by Kac, Murdock, and Szeg\H{o} \cite{KMS, Grenander} and~ remains~true~today \cite{Fik2020, Bogoya}. 

This work makes little use of \cite{Fik2020} and can be considered a direct continuation and generalization of \cite{FikMavr}. It exploits the facts and associated terminology that we list below.
 
\begin{itemize} 

\item{Each eigenvalue of $K_n(\rho)$ is of one of two types, called \textit{type-1} and \textit{type-2.} The most notable distinguishing feature is that type-1 (type-2) eigenvalues are associated with skew-symmetric (symmetric) eigenvectors. More information on the two types can be found in Theorems 3.7 and 4.1 of \cite{Fik2018}, and Remark 4.2 of \cite{Fik2018}.} 

\item{For $k=1$ or $k=2$, the \textit{borderline curve} $B_n^{(k)}$ is the level curve (contour line) which consists of all $\rho$ in the complex plane for which $K_n(\rho)$ has a type-$k$ eigenvalue whose magnitude is equal to the matrix dimension $n$ \cite{FikMavr}.  The curves $B_n^{(1)}$ and $B_n^{(2)}$ are closed curves \cite{FikMavr} and are conjectured in \cite{FikMavr} to be Jordan curves (with no self-intersections).} 

\item{For certain $n$-dependent values of $\rho$, $K_n(\rho)$ possesses repeated eigenvalues. Apart from certain trivial cases, all repeated eigenvalues are (algebraically) double eigenvalues equal to $-n$, and the critical points $\rho=\rho_c$ for which such \textit{borderline/double} eigenvalues occur satisfy $\rho_c\in \mathbb{C}\setminus \mathbb{R}$ \cite{FikMavr}. The aforementioned trivial cases are described in Section~3 of \cite{FikMavr}. As with any eigenvalue, each borderline/double one is either of type-1 or of type-2. Any type-$k$ critical point $\rho_c$, of course, belongs to the borderline curve $B_n^{(k)}$.}
\end{itemize}

For $n=5$ the two borderline curves are shown in Fig. \ref{fig:Fig_b5k}. The cusp-like curve singularities discerned in the figure (at the points $\rho=\rho_c=\pm i2$ of $B_5^{(1)}$ and $\rho=\rho_c\cong \pm 1.247\pm i1.456$ of $B_5^{(2)}$) are true cusps \cite{Fik2020} that signal the appearance of double eigenvalues. More generally, the curve $B_n^{(k)}$ has a cusp singularity at $\rho=\rho_c$ iff $K_n(\rho)=K_n(\rho_c)$ has a type-$k$ borderline/double  eigenvalue \cite{FikMavr,Fik2020}. As previously mentioned, any such eigenvalue equals $-n$. 

For $n=5$, the matrix $K_n(\rho)=K_5(\rho)$ is special because two of its eigenvalues can be found by means of explicit formulas. The two are specifically given by
\begin{equation}
\label{eq:n-equals-5-evalues}
\lambda^{(1)}=\frac{1}{2}\left[2-\rho^2-\rho^4\pm \rho(\rho^2-1)\sqrt{\rho^2+4}\right].
\end{equation}
Explicit computation of the two corresponding $\lambda^{(1)}$-eigenvectors is also possible, showing that the two $\lambda^{(1)}$ in (\ref{eq:n-equals-5-evalues}) are of type-$1$ (hence the superscript). Therefore, the solid line in Fig. \ref{fig:Fig_b5k} can be determined by means of the simple formula (\ref{eq:n-equals-5-evalues}). To illustrate, (\ref{eq:n-equals-5-evalues}) immediately verifies that both $K_5(i2)$ and $K_5(-i2)$ possess a double eigenvalue equal to $-5$.

There seem to be very few cases for which such elementary and explicit formulas can be found,\footnote{When $n=5$, for example, it does not seem possible to \textit{explicitly} determine the three type-2 eigenvalues (which give rise to $B_5^{(2)}$). All known elementary cases are named in Section~1.1 of~ \cite{Fik2020}.} and the level curves in Fig. \ref{fig:Fig_b5k} were  in fact obtained by more general methods that were developed in \cite{FikMavr}. The purpose of the present paper is to generalize those methods to level curves (contour lines) pertaining to type-$k$ eigenvalues of magnitude~$N$; to give a number of properties of the curves thus obtained; to establish the significance of the curves' winding numbers \cite{Roe}; and to point out possible extensions to more general matrices. We denote the type-$k$ level curves of $K_n(\rho)$ by $L_{n,N}^{(k)}$, so that $L_{n,n}^{(k)}=B_{n}^{(k)}$. 
\begin{figure}
\begin{center}
\includegraphics[scale=0.4]{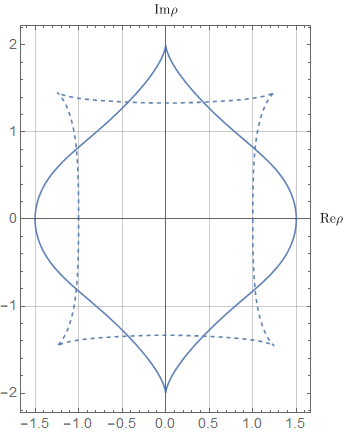}
\end{center}
\caption{Introductory numerical results for $n=5$: Type-1 borderline curve $B_5^{(1)}$ together with type-2 borderline curve $B_5^{(2)}$. The two closed curves are depicted by solid and dashed lines, respectively. The curve singularities at $\rho=\rho_c=\pm i2$ and $\rho=\rho_c\cong \pm 1.247\pm i1.456$ are cusps. When $\rho$ is equal to any such $\rho_c$, $K_n(\rho)=K_5(\rho)=K_5(\rho_c)$ has a borderline/double (type-1 or type-2) eigenvalue equal to $-5$.} \label{fig:Fig_b5k}
\end{figure}

Certain features of $L_{n,N}^{(k)}$ can be predicted beforehand. For any $N$ sufficiently near $n$, eigenvalue continuity makes one expect that $L_{n,N}^{(k)}$ will be close to $B_{n}^{(k)}$.  However,  nontrivial multiple eigenvalues can only have magnitude $n$; for this reason, we can further anticipate that $L_{n,N}^{(k)}$ ($N\ne n$) will no longer exhibit cusps.  

When $N$ is large, an additional prediction can be made. By Section 6.3 of \cite{FikMavr}, for all sufficiently large $|\rho|$ the following hold: (i) There is exactly one type-1 eigenvalue that has magnitude larger than $n$; (ii) there is exactly one type-2 eigenvalue that has magnitude larger than $n$; and (iii) the magnitudes of both these \textit{extraordinary} eigenvalues  are asymptotically approximated by $|\rho|^{n-1}$ as $\rho\to\infty$ \cite{FikMavr} . (Consistently with this expectation, one of the two $\lambda^{(1)}$ in (\ref{eq:n-equals-5-evalues}) satisfies $|\lambda^{(1)}|\sim |\rho|^4$ as $|\rho|\to \infty$.) Accordingly, we can further anticipate that both $L_{n,N}^{(1)}$ and $L_{n,N}^{(2)}$ approach the circle
\begin{equation}
\label{eq:large-N-circle} 
|\rho|=N^\frac{1}{n-1}
\end{equation}
when $N$ is large.

Apart from possible difficulties in numerical implementations, the methods to be developed herein allow $N$ to be arbitrarily large. More interestingly, we allow $N$ to become smaller than $n$. However $N$ must remain larger than a certain threshold to be denoted by $N_\mathrm{min}(n)$. This parameter equals $1$ for $n=3$, approximates $1.1$ for $n=4$, and approaches $0.21n$ for $n=5,6,\ldots$.  As we will see, $N_\mathrm{min}(n)$ is small enough to allow $L_{n,N}^{(k)}$ to differ significantly from $B_{n}^{(k)}$.

The heart of this paper is Theorem \ref{th:main} below, which stems from the following lemma.

\begin{lemma}\label{lemma:lemma3} \cite{FikMavr}
Let $\rho \in \mathbb{C}$. $\lambda \in \mathbb{C}$ is a type-1 (or type-2) eigenvalue of $K_n(\rho)$ iff
\begin{equation}\label{eq:lambda}
\lambda=-\frac{\sin(n\mu)}{\sin{\mu}} \quad \left( {\rm or}\,\,\lambda=\frac{\sin(n\mu)}{\sin{\mu}}  \right),
\end{equation}
where $\mu\in \mathbb{C}$ satisfies
\begin{equation}\label{eq:eigen1eq}
\rho=\frac{\sin{\frac{(n+1)\mu}{2}}}{\sin{\frac{(n-1)\mu}{2}}} \quad \left({\rm or}\,\,  \rho=\frac{\cos{\frac{(n+1)\mu}{2}}}{\cos{\frac{(n-1)\mu}{2}}}\right).
\end{equation}
\end{lemma}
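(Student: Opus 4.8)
The plan is to reduce the eigenequation $K_n(\rho)v=\lambda v$ to a scalar constant-coefficient recurrence. The starting point is the classical fact (see, e.g., \cite{Fik2018}) that, for $\rho^2\neq1$, the matrix $K_n(\rho)^{-1}$ equals $(1-\rho^2)^{-1}$ times the tridiagonal matrix with diagonal $(1,1+\rho^2,\dots,1+\rho^2,1)$ and all sub- and superdiagonal entries equal to $-\rho$; since moreover $\det K_n(\rho)=(1-\rho^2)^{n-1}\neq0$, the value $\lambda=0$ never occurs and $K_n(\rho)v=\lambda v$ is equivalent to $K_n(\rho)^{-1}v=\lambda^{-1}v$. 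Reading off interior rows $2,\dots,n-1$ of the latter gives the three-term recurrence $\rho v_{j+1}-\bigl(1+\rho^2-(1-\rho^2)/\lambda\bigr)v_j+\rho v_{j-1}=0$. The degenerate parameters $\rho=\pm1$, together with the special eigenvalues they produce, I would dispose of separately, exactly as the trivial cases treated in Section~3 of \cite{FikMavr}.

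Next I would analyze the recurrence. Its characteristic polynomial $\rho w^2-\bigl(1+\rho^2-(1-\rho^2)/\lambda\bigr)w+\rho$ has roots of product $1$; writing them as $z$ and $z^{-1}$ and putting $z=e^{i\mu}$ (a legitimate reparametrization, surjective onto $\mathbb{C}\setminus\{0\}$, since $\mu$ may be complex) pins down $\lambda=(1-\rho^2)/(1+\rho^2-2\rho\cos\mu)$ directly from the middle coefficient. For $z\neq z^{-1}$ the interior solution is $v_j=Az^{j}+Bz^{-j}$; the exceptional case $z=\pm1$, i.e.\ $\sin\mu=0$, where the solution space is spanned by $z^j$ and $jz^j$, I would treat by a short separate check or by continuity --- it is the source of the limiting values $\pm n$ and $\pm(-1)^{n+1}n$ of the right-hand side of~(\ref{eq:lambda}).

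The crux is to impose the first and last rows of $K_n(\rho)^{-1}v=\lambda^{-1}v$. Using the value of $\lambda$ and the factorizations $z-\rho=z(1-\rho z^{-1})$ and $z^{-1}-\rho=z^{-1}(1-\rho z)$, these collapse to the homogeneous $2\times2$ system
\[
A(1-\rho z)+B(1-\rho z^{-1})=0,\qquad Az^{n+1}(1-\rho z^{-1})+Bz^{-(n+1)}(1-\rho z)=0 .
\]
A nonzero eigenvector exists iff the determinant vanishes, i.e.\ iff $\bigl((1-\rho z)/(1-\rho z^{-1})\bigr)^{2}=z^{2(n+1)}$, hence iff $(1-\rho z)/(1-\rho z^{-1})=\varepsilon z^{n+1}$ for some $\varepsilon\in\{1,-1\}$. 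Solving this equation for $\rho$ yields $\rho=(1-\varepsilon z^{n+1})/(z-\varepsilon z^{n})$, and rewriting $1\pm z^{m}=e^{im\mu/2}(e^{-im\mu/2}\pm e^{im\mu/2})$ turns the case $\varepsilon=1$ into $\rho=\sin\tfrac{(n+1)\mu}{2}\big/\sin\tfrac{(n-1)\mu}{2}$ and the case $\varepsilon=-1$ into $\rho=\cos\tfrac{(n+1)\mu}{2}\big/\cos\tfrac{(n-1)\mu}{2}$ --- precisely the two alternatives of~(\ref{eq:eigen1eq}).

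Finally I would tie the sign $\varepsilon$ to the eigenvector type and evaluate $\lambda$. The first boundary equation gives $B=-\varepsilon Az^{n+1}$, so $v_j=A(z^{j}-\varepsilon z^{\,n+1-j})$ and therefore $v_{n+1-j}=-\varepsilon v_j$: the choice $\varepsilon=1$ produces skew-symmetric eigenvectors (type-1) and $\varepsilon=-1$ symmetric ones (type-2), matching the definitions recalled from \cite{Fik2018}. Substituting the corresponding $\rho$ into $\lambda=(1-\rho^2)/(1+\rho^2-2\rho\cos\mu)$ and simplifying with $\sin^2A-\sin^2B=\sin(A+B)\sin(A-B)$ (and the cosine analogue in the type-2 case) makes both numerator and denominator factor through, leaving $\lambda=-\sin(n\mu)/\sin\mu$ when $\varepsilon=1$ and $\lambda=+\sin(n\mu)/\sin\mu$ when $\varepsilon=-1$, which is~(\ref{eq:lambda}); the converse direction is the same chain run backwards, with $v_j=z^{j}-\varepsilon z^{\,n+1-j}$ exhibited as an explicit eigenvector. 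I expect the main obstacle to be not any individual computation but the bookkeeping --- keeping the two sign choices, the eigenvector parity, and the sign of $\lambda$ consistently aligned --- together with cleanly handling the degenerate configurations $\rho=\pm1$ and $z=\pm1$ (i.e.\ $\sin\mu=0$), which the clean statement of the lemma quietly folds into limiting values.
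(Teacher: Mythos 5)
The paper does not prove this lemma itself — it imports the statement from \cite{FikMavr} (as the citation in the lemma header makes clear), and that reference in turn builds on Theorems 3.7 and 4.1 of \cite{Fik2018}. So there is no in-paper proof against which to compare your argument; I can only judge it on its own merits and against the known route taken in those references.

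Your argument is correct and is, in substance, the standard derivation used in \cite{Fik2018}/\cite{FikMavr}: exploit the fact that $K_n(\rho)^{-1}$ is tridiagonal (up to the $(1-\rho^2)^{-1}$ prefactor), turn $K_n(\rho)^{-1}v=\lambda^{-1}v$ into the three-term recurrence, substitute $v_j=Az^j+Bz^{-j}$ with $z=e^{i\mu}$, and force the two boundary rows. I checked the individual steps. The interior rows do give $\lambda=(1-\rho^2)/(1+\rho^2-2\rho\cos\mu)$; the two boundary rows do reduce to $A(1-\rho z)+B(1-\rho z^{-1})=0$ and $Az^{n}(z-\rho)+Bz^{-n}(z^{-1}-\rho)=0$, whose determinant vanishes iff $(1-\rho z)/(1-\rho z^{-1})=\varepsilon z^{n+1}$ with $\varepsilon=\pm1$; the $\varepsilon=1$ branch gives $\rho=\sin\tfrac{(n+1)\mu}{2}/\sin\tfrac{(n-1)\mu}{2}$ and the $\varepsilon=-1$ branch gives $\rho=\cos\tfrac{(n+1)\mu}{2}/\cos\tfrac{(n-1)\mu}{2}$; the eigenvector $v_j=A(z^j-\varepsilon z^{n+1-j})$ satisfies $v_{n+1-j}=-\varepsilon v_j$, matching the skew-symmetric (type-1) / symmetric (type-2) dichotomy; and feeding the corresponding $\rho$ back into $\lambda=(1-\rho^2)/(1+\rho^2-2\rho\cos\mu)$ collapses (using $\sin^2A-\sin^2B=\sin(A+B)\sin(A-B)$ and the identity $s_+^2+s_-^2-2s_+s_-\cos\mu=\sin^2\mu$, and likewise for cosines) to $\lambda=\mp\sin(n\mu)/\sin\mu$. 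One small thing to tighten if you were to write this up in full: when you divide the determinant equation by $1-\rho z^{-1}$ you implicitly exclude $z=\rho$; in that case the determinant is $z^{-(n+1)}(1-\rho^2)^2\neq0$, so no eigenvector arises and the exclusion is harmless, but it deserves a line. The other degenerate configurations ($\rho=\pm1$, $\rho=0$, $\sin\mu=0$) you already flag explicitly, and they are indeed the ones folded into limiting values or handled by the "trivial cases" discussion of \cite{FikMavr}.
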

The statement of our Theorem \ref{th:main} resembles that of Theorem 4.1 in \cite{FikMavr}. As already mentioned, there are essential differences in the two theorems' predictions, especially when $n$ approaches $N_\mathrm{min}(n)$. Furthermore,  there are important differences in the theorems' specifics, with the theorem herein being more complicated. Accordingly, much of this paper is devoted to an examination of certain auxiliary functions that appear in the theorem.

\section{Useful properties of Chebyshev polynomials}

We begin with some equalities and inequalities that are most concisely  shown when expressed in terms of $T_k(z)$ and $U_k(z)$ (i.e. the first- and second-kind Chebyshev polynomials of degree $k$):

\begin{lemma}
\begin{equation}
\label{eq:t-def}
T_k(\cos u)=\cos(ku),\quad u\in\mathbb{C},\quad k=0,1,2,\ldots;
\end{equation}
\begin{equation}
\label{eq:u-def}
U_k(\cos u)=\frac{\sin[(k+1)u]}{\sin u},\quad u\in\mathbb{C},\quad k=0,1,2,\ldots;
\end{equation}
\begin{equation}
\label{eq:u-hyperbolic-def}
U_k(\cosh v)=\frac{\sinh[(k+1)v]}{\sinh v},\quad v\in\mathbb{C},\quad k=0,1,2,\ldots;
\end{equation}
\begin{equation}
\label{eq:u-t-identity}
U_{k}(z)=zU_{k-1}(z)+T_{k}(z),\quad z\in\mathbb{C},\quad k=1,2,3\ldots;
\end{equation}
\begin{equation}
\label{eq:t-inequality}
-1\le T_k(x)\le 1,\quad -1\le x\le 1,\quad k=0,1,2,\ldots;
\end{equation}
\begin{equation}
\label{eq:u-t-sum}
U_k(z)=\sum_{m=0}^k z^m\,T_{k-m}(z),\quad z\in\mathbb{C},\quad k=0,1,2,\ldots;
\end{equation}
\begin{equation}
\label{eq:u-inequality}
-(k+1)\le U_{k}(x)\le k+1,\quad -1\le x\le 1,\quad k=0,1,2,\ldots;
\end{equation}
\begin{equation}
\label{eq:t-increases}
1\le x_1<x_2\implies 1 \le T_k(x_1)< T_k(x_2),\quad k=1,2,3,\ldots;
\end{equation}
\begin{equation}
\label{eq:u-increases}
1\le x_1<x_2\implies k+1\le U_{k}(x_1)< U_{k}(x_2),\quad k=1,2,3,\ldots.
\end{equation}
\end{lemma}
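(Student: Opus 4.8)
The plan is to treat \eqref{eq:t-def} and \eqref{eq:u-def} as the defining relations for $T_k$ and $U_k$ and to bootstrap everything else from them. Concretely, I would first recall the three–term recurrences $T_{k+1}(z)=2zT_k(z)-T_{k-1}(z)$ and $U_{k+1}(z)=2zU_k(z)-U_{k-1}(z)$ (with $T_0=U_0=1$, $T_1(z)=z$, $U_1(z)=2z$), which may be taken as the definition of the polynomials; a one-line induction on $k$, using the product-to-sum formulas for $\cos(ku)\cos u$ and $\sin[(k+1)u]\cos u$, shows that $\cos(ku)$ and $\sin[(k+1)u]/\sin u$ are the values at $z=\cos u$ of these polynomials, which is \eqref{eq:t-def}--\eqref{eq:u-def}. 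The extension from real $u$ to $u\in\mathbb{C}$ is automatic: both sides of each identity are entire in $u$ and agree on $\mathbb{R}$. Equation \eqref{eq:u-hyperbolic-def} then follows from \eqref{eq:u-def} by the substitution $u\mapsto iv$, since $\cos(iv)=\cosh v$, $\sin(iv)=i\sinh v$, and $\sin[(k+1)iv]=i\sinh[(k+1)v]$, so the factors of $i$ cancel.

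For \eqref{eq:u-t-identity} I would set $z=\cos u$ and observe that the asserted identity becomes $\sin[(k+1)u]=\cos u\,\sin(ku)+\sin u\,\cos(ku)$, which is just the sine addition formula; since both sides of \eqref{eq:u-t-identity} are polynomials agreeing on the infinite set $\{\,u:\sin u\neq0\,\}$, they agree for all $z\in\mathbb{C}$ by the identity theorem for polynomials. Iterating \eqref{eq:u-t-identity} downward, $U_k=zU_{k-1}+T_k=z^2U_{k-2}+zT_{k-1}+T_k=\cdots$, and terminating at $U_0=T_0=1$, produces the finite expansion \eqref{eq:u-t-sum}; formally this is an induction on $k$.

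The remaining statements are the inequalities, and each is short once \eqref{eq:u-t-sum} is available. For \eqref{eq:t-inequality}, parametrize $x=\cos u$ with $u\in[0,\pi]$, so $T_k(x)=\cos(ku)\in[-1,1]$. For \eqref{eq:u-inequality}, apply the triangle inequality to \eqref{eq:u-t-sum}: since $|x|\le1$ and $|T_{k-m}(x)|\le1$ by \eqref{eq:t-inequality}, we get $\lvert U_k(x)\rvert\le\sum_{m=0}^{k}1=k+1$. For \eqref{eq:t-increases}, parametrize $x=\cosh v$ with $v\ge0$; the hyperbolic analogue of \eqref{eq:t-def} (again obtained via $u\mapsto iv$) gives $T_k(\cosh v)=\cosh(kv)$, which is $\ge1$ and, for $k\ge1$, strictly increasing in $v$, while $v\mapsto\cosh v$ is an increasing bijection $[0,\infty)\to[1,\infty)$. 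Finally, for \eqref{eq:u-increases} I would return to \eqref{eq:u-t-sum} and note that on $[1,\infty)$ each summand $x^m T_{k-m}(x)$ is a product of nonnegative nondecreasing functions each bounded below by $1$ (indeed $T_{k-m}(x)\ge T_{k-m}(1)=1$ by \eqref{eq:t-increases}), hence is itself nondecreasing and $\ge1$; the summand with $m=k$ equals $x^k$, which is strictly increasing for $k\ge1$. Summing over $m=0,\dots,k$ yields $U_k(x)\ge k+1$ together with strict monotonicity.

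Nothing in this lemma is deep; the only point that needs genuine care is the recurring passage from a trigonometric identity valid only where $\sin u\neq0$ (equivalently, away from $z=\pm1$) to a polynomial identity valid for all complex $z$, which I would settle once and for all by citing the identity theorem for polynomials and then invoke freely for \eqref{eq:u-t-identity} and \eqref{eq:u-t-sum}. The one other mild subtlety is keeping the inequalities in \eqref{eq:u-increases} strict rather than merely weak, which is exactly why it is worth isolating the $m=k$ term of \eqref{eq:u-t-sum} as the one carrying the strict increase.
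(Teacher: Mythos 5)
Your proof is correct and follows essentially the same route as the paper: take \eqref{eq:t-def}--\eqref{eq:u-def} as the starting point, derive \eqref{eq:u-hyperbolic-def} and \eqref{eq:t-increases} by the substitution $u\mapsto iv$, obtain \eqref{eq:u-t-identity} from the sine addition formula, get \eqref{eq:u-t-sum} by induction, and deduce the bounds \eqref{eq:t-inequality}, \eqref{eq:u-inequality}, and the monotonicity \eqref{eq:u-increases} from the triangle inequality and the term-by-term analysis of \eqref{eq:u-t-sum}. The only cosmetic difference is that you spell out the passage from trigonometric identities to polynomial identities via the identity theorem and isolate the $m=k$ term to guarantee strict increase, whereas the paper states these points more tersely.
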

\begin{proof}
Eqns. (\ref{eq:t-def}) and (\ref{eq:u-def}) are definitions of the Chebyshev polynomials. Therefore with $z=\cos u$, (\ref{eq:u-t-identity}) reduces to a trigonometric identity. Set $u=iv$ ($v\in\mathbb{C}$) in (\ref{eq:u-def}) to get (\ref{eq:u-hyperbolic-def}). With $\cos u=x$ ($-1\le x\le 1$), (\ref{eq:t-def}) implies (\ref{eq:t-inequality}). 
To verify (\ref{eq:u-t-sum}), use $U_0(z)=T_0(z)=1$, (\ref{eq:u-t-identity}), and induction. Eqn. (\ref{eq:u-inequality}) is trivial when $k=0$; and, for $k\ge 1$, (\ref{eq:u-inequality}) is a consequence of (\ref{eq:u-t-sum}), (\ref{eq:t-inequality}), and the triangle inequality. Setting $u=iv$ ($v\ge 0$) in (\ref{eq:t-def}) shows that, for $k\ge 1$, $T_k(\cosh v)$ is a strictly increasing function of $v$; this fact (together with $T_k(1)=1$) is tantamount to (\ref{eq:t-increases}). Setting $z=x$ ($x\ge 1$) in (\ref{eq:u-t-sum}) and invoking
(\ref{eq:t-increases}), it is seen that $U_k(x)$ is a sum of positive and strictly increasing functions, yielding (\ref{eq:u-increases}). 
\end{proof}

\begin{corollary}
\label{corollary:trig-forms}
The trigonometric/hyperbolic versions of (\ref{eq:u-inequality})/(\ref{eq:u-increases}) are
\begin{equation}
\bigg|\frac{\sin(ku)}{\sin u}\bigg|\le k, \quad u\in\mathbb{R}, \quad k=1,2,3,\ldots,
\end{equation}
\begin{equation}
0\le v_1<v_2\implies k\le \frac{\sinh(kv_1)}{\sinh v_1}<\frac{\sinh(kv_2)}{\sinh v_2},\quad k=2,3,4,\ldots.
\end{equation}
\end{corollary}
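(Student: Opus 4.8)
The plan is to derive both displayed inequalities as the trigonometric, respectively hyperbolic, shadows of the already-established polynomial bounds (\ref{eq:u-inequality}) and (\ref{eq:u-increases}), via the closed forms (\ref{eq:u-def}) and (\ref{eq:u-hyperbolic-def}). In both cases the key move is an index shift: I would write the Dirichlet-type quotient $\sin(ku)/\sin u$ as $U_{k-1}(\cos u)$ and $\sinh(kv)/\sinh v$ as $U_{k-1}(\cosh v)$, so that a statement about $U_{k-1}$ for arguments in $[-1,1]$ (resp.\ in $[1,\infty)$) turns directly into the desired statement about the quotient.

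For the first inequality I would start from (\ref{eq:u-def}) with $k$ replaced by $k-1$, giving $U_{k-1}(\cos u)=\sin(ku)/\sin u$ for every $u$ with $\sin u\neq 0$. For real $u$ we have $\cos u\in[-1,1]$, so (\ref{eq:u-inequality}) (again with $k\to k-1$) yields $|U_{k-1}(\cos u)|\le k$, which is exactly the claimed bound. The one point needing a word is the removable singularity at $\sin u=0$: there $\cos u=\pm 1$, and interpreting the quotient by continuity as the polynomial value $U_{k-1}(\pm 1)=(\pm 1)^{k-1}k$ --- of modulus $k$ --- keeps the bound valid (now with equality). The case $k=1$ is included; the statement then just reads $|\sin u/\sin u|=1\le 1$.

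For the second inequality I would likewise invoke (\ref{eq:u-hyperbolic-def}) with $k\to k-1$, so that $U_{k-1}(\cosh v)=\sinh(kv)/\sinh v$ for $v\neq 0$, with the value at $v=0$ taken to be $U_{k-1}(1)=k$ by continuity. Since $\cosh$ is strictly increasing on $[0,\infty)$, the hypothesis $0\le v_1<v_2$ forces $1\le\cosh v_1<\cosh v_2$; feeding this into (\ref{eq:u-increases}) with $k\to k-1$ --- legitimate precisely when $k-1\ge 1$, i.e.\ $k\ge 2$, which is the stated range --- gives $k\le U_{k-1}(\cosh v_1)<U_{k-1}(\cosh v_2)$, i.e.\ the asserted chain of inequalities.

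There is no genuine obstacle here: the corollary is essentially a change of variables. The only things to watch are the two removable singularities (dealt with by the polynomial interpretation, which is in any case how (\ref{eq:u-def}) and (\ref{eq:u-hyperbolic-def}) should be read at their exceptional points) and keeping the index bookkeeping straight, so that the admissible ranges emerge as stated --- $k\ge 1$ in the bounded case and $k\ge 2$ in the monotone case, the latter reflecting that (\ref{eq:u-increases}) is stated for $U_k$ with $k\ge 1$.
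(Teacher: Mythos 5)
Your proof is correct, and it matches what the paper intends: the paper states the corollary without a separate proof precisely because it is the immediate change of variables $z=\cos u$ (resp.\ $z=\cosh v$) in (\ref{eq:u-inequality}) and (\ref{eq:u-increases}), together with the index shift $k\to k-1$ dictated by (\ref{eq:u-def}) and (\ref{eq:u-hyperbolic-def}). Your handling of the removable singularities and the resulting index ranges ($k\ge 1$ versus $k\ge 2$) is exactly the bookkeeping the paper leaves to the reader.
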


We conclude this section with a lemma on the zeros  and maxima of the positive and even function $|U_k(z)|$.

\begin{lemma}
\label{lemma:u-zeros-extrema}
For $k=2,3,4,\ldots$, let $\alpha_{k,m}$ be a zero of $U_k(z)$.  The $\alpha_{k,m}$ are all real, simple, and lie in the interval $(-1,1)$. They are given by
\begin{equation}
\label{eq:u-zeros}
\alpha_{k,m}=\cos\frac{(k+1-m)\pi}{k+1},\quad m=1,2,\ldots,k. 
\end{equation}

Let $\beta_{k,m}$ be a zero of the derivative $U'_k(z)$. The $\beta_{k,m}$ are also real, with
\begin{equation}
\label{eq:u-maxima}
\alpha_{k,m}<\beta_{k,m}<\alpha_{k,m+1}, \quad m=1,2,\ldots,k-1.
\end{equation}
In (\ref{eq:u-zeros}), the indexing is such that  
 $\alpha_{k,m}$ increases when $m$ increases, and ditto for the $\beta_{k,m}$ in (\ref{eq:u-maxima}).

The two numbers $|U_k(\beta_{k,1})|$ and $|U_k(\beta_{k,k-1})|$ are equal and are the largest of the values $|U_k(\beta_{k,m})|$. Furthermore, 
\begin{equation}
\label{eq:u-largest-value}
0<\max_{\alpha_{k,1}\le x\le \alpha_{k,k}}|U_k(x)|=|U_k(\beta_{k,1})|=|U_k(\beta_{k,k-1})|<k+1,
\end{equation}
so that $|U_k(\beta_{k,1})|$ 
is the global maximum of the even function $|U_k(x)|$ in the interval $[\alpha_{k,1},\alpha_{k,k}]$.

When $\alpha_{k,k}\le x\le 1$, the function $U_k(x)$ strictly increases from  $U_k(\alpha_{k,k})=0$ to $U_k(1)=k+1$. And when $-1\le x\le \alpha_{k,1}$, $|U_k(x)|$ strictly decreases from  $|U_k(-1)|=k+1$ to $U_k(\alpha_{k,1})=0$.
\end{lemma}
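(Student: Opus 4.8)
The plan is to reduce every claim to the trigonometric representation $U_k(\cos u)=\sin[(k+1)u]/\sin u$ of (\ref{eq:u-def}) together with two companion identities, both immediate from (\ref{eq:t-def})--(\ref{eq:u-def}) on $-1\le x\le 1$ and hence, being identities between polynomials, valid for all $x$ (the second obtained by differentiating $\sin[(k+1)u]=\sin u\,U_k(\cos u)$ in $u$):
\begin{equation*}
T_{k+1}(x)^2+(1-x^2)\,U_k(x)^2=1,\qquad (1-x^2)\,U_k'(x)=x\,U_k(x)-(k+1)\,T_{k+1}(x).
\end{equation*}
First I would locate the zeros: for $u\in(0,\pi)$ one has $\sin u\neq0$, so $U_k(\cos u)=0$ exactly when $\sin[(k+1)u]=0$, i.e. $u=j\pi/(k+1)$ with $j=1,\dots,k$, which produces $k$ distinct numbers $\cos[j\pi/(k+1)]\in(-1,1)$; since $\deg U_k=k$ these are all of its zeros and each is simple. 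Setting $j=k+1-m$ gives (\ref{eq:u-zeros}) with the asserted increasing indexing. Because $U_k$ is a real degree-$k$ polynomial with $k$ simple real zeros, Rolle's theorem places a zero of $U_k'$ in each of the $k-1$ gaps $(\alpha_{k,m},\alpha_{k,m+1})$, and since $\deg U_k'=k-1$ these exhaust the zeros of $U_k'$; this is (\ref{eq:u-maxima}) and the monotone indexing of the $\beta_{k,m}$. The reflection $U_k(-x)=(-1)^kU_k(x)$ (from $-\cos u=\cos(\pi-u)$) and its derivative $U_k'(-x)=(-1)^{k+1}U_k'(x)$ make the zero set of $U_k'$ symmetric about $0$, so $\beta_{k,k-m}=-\beta_{k,m}$; in particular $|\beta_{k,m}|\le|\beta_{k,1}|=|\beta_{k,k-1}|$ for every $m$.

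The crux is the value of $U_k$ at a critical point. Setting $x=\beta$ with $U_k'(\beta)=0$ in the derivative identity gives $T_{k+1}(\beta)=\beta\,U_k(\beta)/(k+1)$; substituting this into the Pythagorean identity and solving for $U_k(\beta)^2$ yields
\begin{equation*}
U_k(\beta)^2=\frac{(k+1)^2}{(k+1)^2-k(k+2)\,\beta^2}.
\end{equation*}
Since every $\beta_{k,m}\in(-1,1)$, the denominator exceeds $(k+1)^2-k(k+2)=1>0$, so each $|U_k(\beta_{k,m})|$ is a well-defined positive number strictly below $k+1$; and, the right-hand side being strictly increasing in $\beta^2$ on $[0,1)$, it is largest exactly at $\beta_{k,1}$ and $\beta_{k,k-1}$, where it takes the common value $|U_k(\beta_{k,1})|=|U_k(\beta_{k,k-1})|$. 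To identify this with $\max_{[\alpha_{k,1},\alpha_{k,k}]}|U_k|$, I would note that the smooth function $U_k^2$ attains its maximum on that compact interval either at an endpoint, where it vanishes, or at a zero of $(U_k^2)'=2U_kU_k'$ — that is, at an interior zero of $U_k$ (again value $0$) or at some $\beta_{k,m}$ — so the maximum equals $\max_m U_k(\beta_{k,m})^2$, which is (\ref{eq:u-largest-value}).

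It remains to treat the two outer intervals. On $[\alpha_{k,k},1]$ the polynomial $U_k'$ has no zero — $\alpha_{k,k}$ being a simple zero of $U_k$ forces $U_k'(\alpha_{k,k})\neq0$, and all zeros of $U_k'$ are $\le\beta_{k,k-1}<\alpha_{k,k}$ — so $U_k'$ keeps one sign there, and since $U_k(\alpha_{k,k})=0$ and $U_k(1)=k+1$ that sign is $+$, whence $U_k$ increases strictly from $0$ to $k+1$. Reflecting through $0$ (and using $-\alpha_{k,1}=\alpha_{k,k}$) transports this to $[-1,\alpha_{k,1}]$, where $|U_k(x)|=U_k(-x)$ therefore decreases strictly from $|U_k(-1)|=k+1$ to $0$. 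The only non-mechanical step is spotting the displayed critical-value identity; once the Pythagorean and derivative relations are combined it is a one-line computation, and everything else is degree counting, Rolle's theorem, and the $x\mapsto-x$ symmetry.
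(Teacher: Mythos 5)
Your proposal is correct, and it diverges from the paper's proof at exactly the step that matters. The paper handles the critical values of $U_k$ by citing Mason and Handscomb's book for the facts that the $|U_k(\beta_{k,m})|$ increase as $|\beta_{k,m}|$ increases and that they are bounded by the endpoint value $k+1$. You instead derive the closed-form expression
\begin{equation*}
U_k(\beta)^2=\frac{(k+1)^2}{(k+1)^2-k(k+2)\beta^2}\qquad\text{whenever }U_k'(\beta)=0,
\end{equation*}
by combining the Pell-type identity $T_{k+1}(x)^2+(1-x^2)U_k(x)^2=1$ with the differentiated relation $(1-x^2)U_k'(x)=xU_k(x)-(k+1)T_{k+1}(x)$, and then read off both the strict bound $|U_k(\beta_{k,m})|<k+1$ (since the denominator exceeds $1$ for $\beta^2<1$) and the monotone dependence on $|\beta|$ directly from this formula. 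This makes the key comparison self-contained rather than cited, at the modest cost of introducing the two auxiliary identities; everything else — the trigonometric location of the zeros, the Rolle interlacing, the evenness argument, and the end-interval monotonicity from the absence of critical points in $[\alpha_{k,k},1]$ — matches the paper's route. One can add that your symmetry observation $\beta_{k,k-m}=-\beta_{k,m}$ both supplies the equality $|U_k(\beta_{k,1})|=|U_k(\beta_{k,k-1})|$ (which the paper gets from evenness of $|U_k|$, written there with a harmless typo as $|U_{k-1}|$) and identifies $\beta_{k,1},\beta_{k,k-1}$ as the critical points of maximal modulus, closing the argument cleanly.
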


\begin{proof}
By (\ref{eq:u-def}), the $\alpha_{k,m}$ in (\ref{eq:u-zeros}) are zeros of $U_k(z)$. Since there are $k$ distinct zeros, all are simple. By Rolle's theorem and for $m=1,2,\ldots,k-1$, $U'_k(\beta_{k,m})=0$ for some $\beta_{k,m}$ satisfying (\ref{eq:u-maxima}). Therefore all zeros $\beta_{k,m}$ of $U'_k(z)$ are real and are bracketed by the inequality in (\ref{eq:u-maxima}). By the extreme value theorem and $U_k(\alpha_{k,m})=U_k(\alpha_{k,m+1})=0$, the nonnegative function $|U_k(x)|$ assumes its maximum value within $[\alpha_{k,m},\alpha_{k,m+1}]$ iff $x=\beta_{k,m}$. As $|U_{k-1}(x)|$ is even, we have $|U_k(\beta_{k,1})|=|U_k(\beta_{k,k-1})|$. As discussed in \cite{Mason}, the $|U_k(\beta_{k,m})|$ increase as $|\beta_{k,m}|$ increases away from zero, and all  $|U_k(\beta_{k,m})|$ are smaller than the endpoint values $|U_{k}(\pm 1)|=k+1$, giving (\ref{eq:u-largest-value}).  Since no zero of $U'_k(x)$ lies in the interval $[\alpha_{k,k},1]$, $U_k(x)$ increases strictly in this interval, from its value $0$ at the left endpoint to its value $k+1$ at the right endpoint. The statement pertaining to the interval $[-1,\alpha_{1,k}]$ follows because $|U_k(x)|$ is even.
\end{proof}

\section{The functions $h_{n,N}(v)$ and $g_{n,N}(u)$}

This section defines and gives useful properties of the functions $h_{n,N}(v)$ and $g_{n,N}(u)$, which will play an important role in our main theorem.

\subsection{The function $h_{n,N}(v)$}

\begin{definition} For $n=3,4,\ldots$ and $N\in\mathbb{R}$ with $N>0$, we define 
\begin{equation}
\label{eq:h-def}
h_{n,N}(v)=\sinh^2(nv)-N^2 \sinh^2 {v}, \quad v\ge 0.
\end{equation}
\end{definition}

In the case $N<n$, the behavior that interests us is straightforward. It is given by the following proposition.

\begin{proposition} 
\label{proposition:h-for-Nsmall}
For $N<n$, the function $h_{n,N}(v)$ equals zero when $v=0$, and 
is positive and strictly increasing  when $v\in (0,+\infty)$.
\end{proposition}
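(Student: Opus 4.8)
The plan is to reduce both assertions to the monotonicity of the second-kind Chebyshev polynomial $U_{n-1}$ on $[1,+\infty)$, already recorded as (\ref{eq:u-increases}). The vanishing at $v=0$ is immediate, since $\sinh 0=0$ forces $h_{n,N}(0)=0$. For $v>0$ we have $\cosh v>1$, and the hyperbolic form (\ref{eq:u-hyperbolic-def}) of the definition of $U_{n-1}$ gives
\[
\frac{\sinh(nv)}{\sinh v}=U_{n-1}(\cosh v).
\]
Since $U_{n-1}(1)=n$ and, by (\ref{eq:u-increases}) (applicable as $n-1\ge 2$), $U_{n-1}$ is strictly increasing on $[1,+\infty)$, we obtain $U_{n-1}(\cosh v)>n$ for every $v>0$. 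Because $0<N<n$ (recall $N>0$ from the definition and $N<n$ by hypothesis), this yields $\sinh(nv)/\sinh v>n>N$, hence $\sinh^2(nv)>N^2\sinh^2 v$, i.e.\ $h_{n,N}(v)>0$. This settles positivity on $(0,+\infty)$.

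For strict monotonicity I would differentiate; a one-line computation gives $h_{n,N}'(v)=n\sinh(2nv)-N^2\sinh(2v)$. Applying the inequality $\sinh(nw)>n\sinh w$ just obtained with $w=2v>0$ in place of $v$, we get $\sinh(2nv)>n\sinh(2v)$, and therefore
\[
h_{n,N}'(v)>n^2\sinh(2v)-N^2\sinh(2v)=(n^2-N^2)\sinh(2v)>0
\]
for all $v>0$, using $n^2>N^2$ and $\sinh(2v)>0$. Hence $h_{n,N}$ is strictly increasing on $(0,+\infty)$; together with $h_{n,N}(0)=0$ and continuity this would, incidentally, re-derive positivity as well.

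There is no substantial obstacle here. The one point deserving care is the strict inequality $U_{n-1}(\cosh v)>n$ at values of $v$ arbitrarily close to $0$, where the quotient $\sinh(nv)/\sinh v$ is a $0/0$ form; working instead with the polynomial $U_{n-1}$ evaluated at $\cosh v$, to which (\ref{eq:u-increases}) applies verbatim, sidesteps this cleanly. One could alternatively invoke the hyperbolic inequality of Corollary \ref{corollary:trig-forms} in place of (\ref{eq:u-hyperbolic-def})--(\ref{eq:u-increases}), but the polynomial route keeps the strict inequalities most transparent.
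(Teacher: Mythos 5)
Your proof is correct. It differs in structure from the paper's argument: the paper factorizes
\[
h_{n,N}(v)=\bigl[\sinh(nv)+N\sinh v\bigr]\bigl[\sinh(nv)-N\sinh v\bigr],
\]
notes the first factor is obviously positive and strictly increasing, and shows the second is too by computing its derivative $n\cosh(nv)-N\cosh v$, which is positive simply because $\cosh(nv)>\cosh v>1$ and $n>N$; a product of two positive, strictly increasing functions is then positive and strictly increasing. You instead establish positivity of $h$ directly from the hyperbolic Chebyshev inequality and then differentiate $h$ itself, obtaining $h'_{n,N}(v)=n\sinh(2nv)-N^2\sinh(2v)$, and you invoke the same inequality a second time (now at $2v$) to conclude $h'>0$. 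Both routes use the inequality $\sinh(nw)/\sinh w>n$ for $w>0$ in an essential way; the paper's factorization trick lets it get away with only the elementary monotonicity of $\cosh$ in the derivative step, whereas your route applies the Chebyshev bound twice but has the small advantage of computing $h'$ once and for all, which the paper reuses (in unfactorized form) in the proof of the companion Proposition for $N>n$.
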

\begin{proof}
The equality $h_{n,N}(0)=0$ is obvious. For $v>0$, write
\begin{equation}
\label{eq:h-factorized}
h_{n,N}(v)=[\sinh(nv)+N \sinh{v}][\sinh(nv)-N \sinh{v}].
\end{equation}
The first factor is positive and strictly increasing. The second factor equals 0 when $v=0$, equals $+\infty$ when $v=+\infty$ and has a derivative equal to $n\cosh(nv)-N \cosh{v}$. As $\cosh(nv)>\cosh v>1$ and $n>N$, this derivative cannot vanish. Thus the second factor is also positive and strictly increasing, proving the lemma.
\end{proof}

In the case $N>n$, the behavior of $h_{n,N}(v)$ is not as simple. To describe it, we introduce an auxiliary quantity $v_0(n,N)$. This is defined as the solution to a transcendental equation as follows.

\begin{lemma}
\label{lemma:v0-def}
For $N>n$, the equation
\begin{equation}
\label{eq:temp6}
\frac{\sinh (nv)}{\sinh v}=N, \quad v\ge 0,
\end{equation}
has a unique solution. We denote this solution, which is positive, by $v=v_0(n,N)$. Its limiting value as $N\to n$ is 
\begin{equation}
\lim_{N\to n+0} v_0(n,N)=0.
\end{equation}
\end{lemma}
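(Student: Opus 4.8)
The plan is to recast the claim as a statement about the single real function $\phi_n(v):=\sinh(nv)/\sinh v$, $v>0$. By (\ref{eq:u-hyperbolic-def}) this equals $U_{n-1}(\cosh v)$, which extends continuously to $v=0$ with value $U_{n-1}(\cosh 0)=U_{n-1}(1)=n$; so henceforth I regard $\phi_n$ as a continuous function on $[0,+\infty)$ with $\phi_n(0)=n$. I would then record two further properties of $\phi_n$ on $[0,+\infty)$: it is strictly increasing, which is precisely the hyperbolic inequality in Corollary~\ref{corollary:trig-forms} applied with $k=n\ge 3$ (note that this inequality already covers the endpoint $v_1=0$, with the value there read as the limit $n$); and $\phi_n(v)\to+\infty$ as $v\to+\infty$, which follows either from $U_{n-1}$ being a polynomial of degree $n-1\ge 2$ with $\cosh v\to+\infty$, or directly from $\sinh(nv)/\sinh v=e^{(n-1)v}(1-e^{-2nv})/(1-e^{-2v})\to+\infty$.

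With these properties in hand, the existence-and-uniqueness part is immediate: since $\phi_n$ is continuous and strictly increasing on $[0,+\infty)$ with $\phi_n(0)=n<N$ and $\phi_n(v)\to+\infty$, the intermediate value theorem yields a $v>0$ with $\phi_n(v)=N$, and strict monotonicity makes it the only such $v$. This $v$ is the claimed $v_0(n,N)$, and it is positive because $\phi_n$ attains the value $N>n$ only at a point strictly to the right of $0$ (indeed $\phi_n\ge n$ everywhere, with equality only at $v=0$).

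For the limit as $N\to n+0$, I would argue that $\phi_n$ is a continuous, strictly increasing bijection of $[0,+\infty)$ onto $[n,+\infty)$, so its inverse is continuous, and hence $v_0(n,N)=\phi_n^{-1}(N)\to\phi_n^{-1}(n)=0$. If one prefers not to invoke continuity of the inverse directly, the same conclusion follows from a short monotonicity argument: because $\phi_n$ is increasing, $N\mapsto v_0(n,N)$ is increasing and bounded below by $0$, hence has a limit $L\ge 0$ as $N\to n+0$; passing to the limit in $\phi_n(v_0(n,N))=N$ and using continuity of $\phi_n$ gives $\phi_n(L)=n=\phi_n(0)$, so injectivity forces $L=0$.

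I do not anticipate a genuine obstacle here; the only point that needs a little care is the treatment of the endpoint $v=0$ — namely observing that $\phi_n$ extends continuously there with value $n$ and that the monotonicity statement in Corollary~\ref{corollary:trig-forms} already includes this endpoint. Once that is set up cleanly, the rest is a routine application of the intermediate value theorem and strict monotonicity.
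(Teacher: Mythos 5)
Your argument is correct and follows essentially the same route as the paper's: invoke Corollary~\ref{corollary:trig-forms} for strict monotonicity of $\sinh(nv)/\sinh v$ on $[0,+\infty)$, note its value $n$ at $v=0$ and divergence as $v\to+\infty$, and conclude by the intermediate value theorem and injectivity. The only difference is that you explicitly justify the limiting claim $\lim_{N\to n+0}v_0(n,N)=0$ (via continuity of the inverse, or the monotone-limit fallback), whereas the paper's proof leaves that part implicit; your added care there is a small improvement rather than a different approach.
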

\begin{proof} The continuous function on the left-hand side of (\ref{eq:temp6}) is strictly increasing in $[0,+\infty)$ by Corollary \ref{corollary:trig-forms}, equals $n$ when $v=0$, and equals $+\infty$ when $v=+\infty$. Since the right-hand side $N$ is greater than $n$ there is a unique solution, and it is positive.
\end{proof}

We can now give our results for $N>n$.

\begin{proposition} 
\label{proposition:h-for-large-N}
For $N>n$, the function $h_{n,N}(v)$ is negative in the interval $(0,v_0(n,N))$, vanishes when $v=0$ or $v=v_0(n,N)$, and is positive and strictly increasing in the interval $(v_0(n,N),+\infty)$.
\end{proposition}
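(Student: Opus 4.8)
The plan is to follow the proof of Proposition~\ref{proposition:h-for-Nsmall}: again use the factorization (\ref{eq:h-factorized}),
\[
h_{n,N}(v)=[\sinh(nv)+N\sinh v]\,[\sinh(nv)-N\sinh v],
\]
but now treat the second factor by means of $v_0(n,N)$ rather than a derivative estimate. The equality $h_{n,N}(0)=0$ is immediate since both factors vanish at $v=0$. For $v>0$ the first factor $\sinh(nv)+N\sinh v$ is positive and strictly increasing, exactly as in Proposition~\ref{proposition:h-for-Nsmall}. For the second factor $f(v):=\sinh(nv)-N\sinh v$ I would write, for $v>0$,
\[
f(v)=\sinh v\left(\frac{\sinh(nv)}{\sinh v}-N\right).
\]
By Corollary~\ref{corollary:trig-forms} the function $\sinh(nv)/\sinh v$ is strictly increasing on $(0,+\infty)$, with limiting value $n$ at $0$ and value $+\infty$ at $+\infty$; and by the definition of $v_0(n,N)$ in Lemma~\ref{lemma:v0-def} it equals $N$ precisely at $v=v_0(n,N)$. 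Hence the parenthesis, and therefore $f$, is negative on $(0,v_0(n,N))$, zero at $v_0(n,N)$, and positive on $(v_0(n,N),+\infty)$.

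Multiplying the two factors then yields the sign claims at once: $h_{n,N}<0$ on $(0,v_0(n,N))$; $h_{n,N}(v_0(n,N))=(\text{positive})\cdot 0=0$; and $h_{n,N}>0$ on $(v_0(n,N),+\infty)$. For the strict monotonicity on $(v_0(n,N),+\infty)$ I would deliberately avoid differentiating $h_{n,N}$ directly, since its derivative $n\sinh(2nv)-N^2\sinh(2v)$ is not comfortable to sign. Instead, on $(v_0(n,N),+\infty)$ both $\sinh v$ and $\sinh(nv)/\sinh v-N$ are positive and strictly increasing, so their product $f$ is positive and strictly increasing there, and the first factor $\sinh(nv)+N\sinh v$ is positive and strictly increasing there as well. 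A product of two positive, strictly increasing functions is itself positive and strictly increasing, so $h_{n,N}$ is positive and strictly increasing on $(v_0(n,N),+\infty)$, completing the proof.

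The one step that needs a little care is this monotonicity argument, and the observation that a product of positive, strictly increasing functions is again strictly increasing is what makes it painless: it reduces the whole proposition to Corollary~\ref{corollary:trig-forms} (monotonicity and limiting values of $\sinh(nv)/\sinh v$) together with the defining property of $v_0(n,N)$ from Lemma~\ref{lemma:v0-def}, with no additional estimates required.
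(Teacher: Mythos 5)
Your proof is correct, but it takes a genuinely different route from the paper's. The paper works with the derivative $h'_{n,N}(v)=n\sinh(2nv)-N^2\sinh(2v)$: applying Lemma~\ref{lemma:v0-def} (with $N^2/n$ in place of $N$, valid since $N>n\Rightarrow N^2/n>n$) shows that $h'_{n,N}$ has exactly one positive zero; Rolle's theorem then forces that zero into $(0,v_0(n,N))$, so $h'_{n,N}>0$ on $(v_0(n,N),+\infty)$, and the sign pattern of $h_{n,N}$ follows from its endpoint behavior. You instead avoid differentiation entirely and exploit the factorization~\eqref{eq:h-factorized} together with the rewriting $\sinh(nv)-N\sinh v=\sinh v\bigl(\tfrac{\sinh(nv)}{\sinh v}-N\bigr)$, so everything reduces to Corollary~\ref{corollary:trig-forms} and the defining property of $v_0(n,N)$ in Lemma~\ref{lemma:v0-def}; the elementary observation that a product of positive strictly increasing functions is positive and strictly increasing then delivers the monotonicity without any derivative computation. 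Both arguments are sound; the paper's is perhaps slightly slicker because it reuses Lemma~\ref{lemma:v0-def} twice (once for $h_{n,N}$ and once for $h'_{n,N}$, with the pleasant by-product of locating the minimum at $\tfrac{1}{2}v_0(n,N^2/n)$), whereas yours is more self-contained, reads closer in spirit to the proof of Proposition~\ref{proposition:h-for-Nsmall}, and sidesteps the sign analysis of $h'_{n,N}$ that you rightly flagged as "not comfortable."
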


\begin{proof} It is obvious that $h_{n,N}(0)=0$. By (\ref{eq:h-factorized}) and Lemma~\ref{lemma:v0-def}, $h_{n,N}(v)$ vanishes at exactly positive point, namely $v=v_0(n,N)$. The derivative $h'_{n,N}(v)$ is
\begin{equation*}
h'_{n,N}(v)=n\sinh(2nv)-N^2 \sinh{(2v)}, \quad v>0.
\end{equation*}
 By Lemma \ref{lemma:v0-def}, $h'_{n,N}(v)$ also vanishes at exactly one point which, by Rolle's theorem, must lie in the interval $(0,v_0(n,N))$. (This point, in fact, is $v=\frac{1}{2}v_0(n,\frac{N^2}{n})$.) Finally, we can easily show that $h_{n,N}(v)$ is negative as $v\to 0+0$ and positive as $v \to +\infty$. 
These aforementioned properties prove the lemma.
\end{proof}

\subsection{The function $g_{n,N}(u)$}

We now discuss the following function  
$g_{n,N}(u)$.

\begin{definition} For $n=3,4,\ldots$ and $N\in\mathbb{R}$ with $N>0$, we define 
\begin{equation}
\label{eq:g-def}
g_{n,N}(u)=N^2 \sin^2 {u}-\sin^2(nu), \quad -\pi\le u\le \pi.
\end{equation}
\end{definition}

We first assume $N>n$. In this case, what interests us is uncomplicated:
 
\begin{proposition}
\label{proposition:g-for-large-N}
For $N>n$, the function $g_{n,N}(u)$ is zero when  $u=0$ and $|u|=\pi$, and positive when $0<|u|<\pi$. \end{proposition}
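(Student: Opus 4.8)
The plan is to exploit the evenness of $g_{n,N}$ together with the Chebyshev-type bound already recorded in Corollary~\ref{corollary:trig-forms}. Since $g_{n,N}(-u)=g_{n,N}(u)$, it suffices to treat $u\in[0,\pi]$. The endpoint claims are then immediate by inspection: at $u=0$ we have $\sin u=0$ and $\sin(nu)=\sin 0=0$, while at $u=\pi$ we have $\sin u=0$ and $\sin(nu)=\sin(n\pi)=0$; hence $g_{n,N}$ vanishes at $u=0$ and at $|u|=\pi$.

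The substantive case is $0<u<\pi$, where $\sin u>0$. Here I would invoke the first inequality of Corollary~\ref{corollary:trig-forms} (equivalently, the bound $|U_{n-1}(\cos u)|\le n$ coming from (\ref{eq:u-inequality})), which gives $|\sin(nu)|\le n\,\sin u$ and therefore $\sin^2(nu)\le n^2\sin^2 u$. Because $N>n>0$ and $\sin^2 u>0$ on $(0,\pi)$, we obtain the strict inequality $n^2\sin^2 u<N^2\sin^2 u$, and hence
\[
g_{n,N}(u)=N^2\sin^2 u-\sin^2(nu)\ \ge\ N^2\sin^2 u-n^2\sin^2 u=(N^2-n^2)\sin^2 u\ >\ 0,
\]
which is exactly the asserted positivity.

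There is essentially no genuine obstacle in this argument; the only point deserving a word of care is that the \emph{strict} positivity on $(0,\pi)$ is forced by the strict hypothesis $N>n$, and does not rely on any strictness in Corollary~\ref{corollary:trig-forms} — the non-strict bound $|\sin(nu)|\le n|\sin u|$ is all that is needed. One could instead factor $g_{n,N}(u)=\bigl(N\sin u-\sin(nu)\bigr)\bigl(N\sin u+\sin(nu)\bigr)$, as is done for $h_{n,N}$ in (\ref{eq:h-factorized}), and argue that each factor is positive on $(0,\pi)$ via the same bound; but the one-line estimate above is cleaner and sidesteps any case split on the sign of $\sin(nu)$.
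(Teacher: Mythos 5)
Your argument is correct and is essentially identical to the paper's: both reduce to the Chebyshev/trigonometric bound $|\sin(nu)|\le n|\sin u|$ from Corollary~\ref{corollary:trig-forms}, then use $N>n$ together with $\sin u\ne 0$ on $0<|u|<\pi$ to obtain strict positivity, with the endpoint cases checked directly. The factoring alternative you mention in passing is not used in the paper either.
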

\begin{proof}
The statement about zeros is trivial. In the case $0<|u|<\pi$ we have
\begin{equation}
g_{n,N}(u)> n^2\sin^2 u-\sin^2 (nu)\ge 0,
\end{equation}
where we used (\ref{eq:g-def}), the inequalities $N>n$ and $\sin u\ne 0$,  and Corollary \ref{corollary:trig-forms}.
\end{proof}

When $N<n$, the behaviors of interest are best explained in terms of the auxiliary quantities $x'_0(n)$, $N_\mathrm{min}(n)$, $x_0(n,N)$, and $u_0(n,N)$. These are defined via solutions to transcendental equations, as described by the two lemmas that follow.

\begin{lemma}
\label{lemma:x0prime-and-n0-def}
The equation
\begin{equation}
\label{eq:x0prime-def}
U'_{n-1}(x)=0,\quad \cos\frac{2\pi}{n}<x<\cos\frac{\pi}{n}, 
\end{equation}
has a unique solution which we denote by $x'_0(n)$. Further, the $N_\mathrm{min}(n)$ defined by
\begin{equation}
\label{eq:n0-def}
N_\mathrm{min}(n)=\Big|U_{n-1}\left(x'_0(n)\right)\Big|
\end{equation}
satisfies
\begin{equation}
\label{eq:n0-smaller-n}
1\le N_\mathrm{min}(n)<n,
\end{equation}
and is the global maximum attained by $|U_{n-1}(x)|$ in the interval $[-\cos\frac{\pi}{n},\cos\frac{\pi}{n}]$.  
\end{lemma}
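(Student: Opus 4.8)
The plan is to deduce everything from Lemma~\ref{lemma:u-zeros-extrema} with $k=n-1$ (legitimate since $n\ge 3$ gives $k\ge 2$), after translating the endpoints appearing in (\ref{eq:x0prime-def}) and in (\ref{eq:n0-def})'s claim into the $\alpha$-notation. First I would record that, by (\ref{eq:u-zeros}), $\alpha_{n-1,m}=\cos\frac{(n-m)\pi}{n}$; hence $\cos\frac{2\pi}{n}=\alpha_{n-1,n-2}$, $\cos\frac{\pi}{n}=\alpha_{n-1,n-1}$, and $-\cos\frac{\pi}{n}=\cos\frac{(n-1)\pi}{n}=\alpha_{n-1,1}$. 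So (\ref{eq:x0prime-def}) requests the zeros of $U'_{n-1}$ in the open interval $(\alpha_{n-1,n-2},\alpha_{n-1,n-1})$, while the interval in (\ref{eq:n0-def})'s claim is exactly $[\alpha_{n-1,1},\alpha_{n-1,n-1}]$.

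For the uniqueness of $x'_0(n)$: Lemma~\ref{lemma:u-zeros-extrema} produces real zeros $\beta_{n-1,1}<\cdots<\beta_{n-1,n-2}$ of $U'_{n-1}$ with $\alpha_{n-1,m}<\beta_{n-1,m}<\alpha_{n-1,m+1}$; since $\deg U'_{n-1}=n-2$ these are all of them, so precisely one, namely $\beta_{n-1,n-2}$, falls in $(\alpha_{n-1,n-2},\alpha_{n-1,n-1})$, giving $x'_0(n)=\beta_{n-1,n-2}$. Then $N_\mathrm{min}(n)=\big|U_{n-1}(x'_0(n))\big|=\big|U_{n-1}(\beta_{n-1,n-2})\big|$, and the last assertions of Lemma~\ref{lemma:u-zeros-extrema} (with $k=n-1$) say that this common value $\big|U_{n-1}(\beta_{n-1,1})\big|=\big|U_{n-1}(\beta_{n-1,n-2})\big|$ is the global maximum of $|U_{n-1}(x)|$ on $[\alpha_{n-1,1},\alpha_{n-1,n-1}]=[-\cos\frac{\pi}{n},\cos\frac{\pi}{n}]$ and is strictly below $(n-1)+1=n$. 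This settles the ``global maximum'' statement and the inequality $N_\mathrm{min}(n)<n$.

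For the remaining bound $N_\mathrm{min}(n)\ge 1$ a small additional idea is needed: it is enough to display one point of $[-\cos\frac{\pi}{n},\cos\frac{\pi}{n}]$ where $|U_{n-1}|\ge 1$. Using $U_{n-1}(\cos u)=\sin(nu)/\sin u$ from (\ref{eq:u-def}), I would evaluate at $u=\tfrac{3\pi}{2n}$. For $n\ge 3$ this $u$ lies in $[\tfrac{\pi}{n},\,\pi-\tfrac{\pi}{n}]$ (the right inequality reading $\tfrac{3}{2n}\le 1-\tfrac1n$, i.e.\ $n\ge\tfrac52$) and in $(0,\tfrac{\pi}{2}]$; hence $\cos u\in[-\cos\tfrac{\pi}{n},\cos\tfrac{\pi}{n}]$ and $0<\sin u\le 1$, while $\sin(nu)=\sin\tfrac{3\pi}{2}=-1$, so $|U_{n-1}(\cos u)|=1/\sin\tfrac{3\pi}{2n}\ge 1$. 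Since $N_\mathrm{min}(n)$ is the maximum of $|U_{n-1}|$ over that interval, $N_\mathrm{min}(n)\ge 1$, which together with the previous paragraph gives (\ref{eq:n0-smaller-n}).

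The main obstacle is purely organizational: keeping the indexing of the $\alpha_{n-1,m}$ and $\beta_{n-1,m}$ straight and invoking the degree count that pins $x'_0(n)$ down uniquely. The only genuinely new ingredient is the choice of evaluation point $u=\tfrac{3\pi}{2n}$ for the lower bound (any $u=\tfrac{(2k+1)\pi}{2n}$ with $1\le k\le n-2$ serves equally well).
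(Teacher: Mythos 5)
Your proof is correct and takes essentially the same approach as the paper's: set $k=n-1$ in Lemma~\ref{lemma:u-zeros-extrema} to identify $x'_0(n)$ with $\beta_{n-1,n-2}$ and read off the global-maximum statement and the bound $N_\mathrm{min}(n)<n$, then obtain $N_\mathrm{min}(n)\ge 1$ by evaluating $|U_{n-1}|$ at the very same test point $\cos\tfrac{3\pi}{2n}$ via~(\ref{eq:u-def}). You merely spell out a few steps the paper leaves implicit (the degree count pinning down $x'_0(n)$, and the containment of $\tfrac{3\pi}{2n}$ in the relevant interval).
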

\begin{proof}
In Lemma \ref{lemma:u-zeros-extrema}, set $k=n-1$ to obtain $|\alpha_{k,1}|=\alpha_{k,k}=\cos\frac{\pi}{n}$, $\alpha_{k,k-1}=\cos\frac{2\pi}{n}$, $|\beta_{k,1}|=\beta_{k,k-1}=x'_0(n)$, and $N_\mathrm{min}(n)=\big|U_{n-1}\left(x'_0(n)\right)\big|$. Since $\cos\frac{3\pi}{2n}$ belongs to $(\cos\frac{2\pi}{n},\cos\frac{\pi}{n})$ in which $N_\mathrm{min}(n)$ is the maximum value, we have  $N_\mathrm{min}(n)\ge \big|U_{n-1}\left(\cos\frac{3\pi}{2n}\right)\big|$; the first inequality in (\ref{eq:n0-smaller-n}) then follows from (\ref{eq:u-def}).
\end{proof}

\begin{remark}
As we will see, the value $N_\mathrm{min}(n)$ is the smallest level $N$ (of the level curve $L_{n,N}^{(k)}$, for both $k=1$ and $k=2$) for which our main theorem is valid. The previously-mentioned large-$n$ approximation $N_\mathrm{min}(n)\cong 0.21n$ results by substituting $x'_0(n)\cong 3\pi/(2n)$ into (\ref{eq:n0-def}), and using (\ref{eq:u-def}).
\end{remark}

\begin{lemma}
\label{lemma:x0-u0-def}
Let $N$ be such that $N_\mathrm{min}(n)<N<n$, where $N_\mathrm{min}(n)$ is defined in Lemma \ref{lemma:x0prime-and-n0-def}. Then the equation
\begin{equation}
\label{eq:x0-equation}
|U_{n-1}(x)|=N, \quad -1< x < 1,
\end{equation}
has precisely one positive solution, to be denoted by $x_0(n,N)$; and precisely one negative solution equal to $-x_0(n,N)$. 
It is also true that
\begin{equation}
\label{eq:U-n-N-inequality}
|U_{n-1}(x)|
\begin{cases}
<N, \quad |x|<x_0(n,N),\\
=N, \quad |x|=x_0(n,N),\\
>N, \quad x_0(n,N)<|x|\le 1.
\end{cases}
\end{equation}
We further denote
\begin{equation}
\label{eq:u0-def}
u_0(n,N)=\arccos\left[x_0(n,N)\right].
\end{equation}
This quantity satisfies
\begin{equation}
\label{eq:u0-inequality}
0<u_0(n,N)<\frac{\pi}{n}\le\frac{\pi}{3}
\end{equation}
and
\begin{equation}
\label{eq:u0-limiting-value}
\lim_{N\to n-0}u_0(n,N)=0.
\end{equation}
\end{lemma}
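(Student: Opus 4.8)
The plan is to deduce every assertion from Lemma~\ref{lemma:u-zeros-extrema} (applied with $k=n-1$) together with Lemma~\ref{lemma:x0prime-and-n0-def}. With $k=n-1$, those results describe the even function $|U_{n-1}(x)|$ on $[-1,1]$ as follows: its two largest zeros are $\mp\cos\frac{\pi}{n}$; on the interval $[\cos\frac{\pi}{n},1]$ the polynomial $U_{n-1}(x)$ is nonnegative and strictly increases from $U_{n-1}(\cos\frac{\pi}{n})=0$ to $U_{n-1}(1)=n$; and on the symmetric interval $[-\cos\frac{\pi}{n},\cos\frac{\pi}{n}]$ the global maximum of $|U_{n-1}(x)|$ equals $N_\mathrm{min}(n)$. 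Since by hypothesis $N>N_\mathrm{min}(n)$, this last fact immediately shows that $|U_{n-1}(x)|=N$ has no solution with $|x|\le\cos\frac{\pi}{n}$.

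First I would locate the positive solution. On $[\cos\frac{\pi}{n},1]$ the map $x\mapsto U_{n-1}(x)$ is a continuous, strictly increasing bijection onto $[0,n]$; as $N_\mathrm{min}(n)<N<n$ forces $0<N<n$, there is exactly one $x_0(n,N)\in(\cos\frac{\pi}{n},1)$ with $U_{n-1}(x_0(n,N))=N$, hence with $|U_{n-1}(x_0(n,N))|=N$. Combined with the previous paragraph, $x_0(n,N)$ is the unique positive solution of (\ref{eq:x0-equation}); since $|U_{n-1}|$ is even (because $U_{n-1}$ has parity $(-1)^{n-1}$), $-x_0(n,N)$ is the unique negative solution. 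The trichotomy (\ref{eq:U-n-N-inequality}) then follows by cases: if $|x|\le\cos\frac{\pi}{n}$ then $|U_{n-1}(x)|\le N_\mathrm{min}(n)<N$; while if $\cos\frac{\pi}{n}<|x|\le 1$ then $|U_{n-1}(x)|=|U_{n-1}(|x|)|=U_{n-1}(|x|)$, and strict monotonicity on $[\cos\frac{\pi}{n},1]$ compares this with $U_{n-1}(x_0(n,N))=N$ exactly according to whether $|x|<x_0(n,N)$, $=x_0(n,N)$, or $>x_0(n,N)$.

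It remains to treat $u_0(n,N)=\arccos[x_0(n,N)]$. Because $x_0(n,N)\in(\cos\frac{\pi}{n},1)$ and $\arccos$ is strictly decreasing with $\arccos(\cos\frac{\pi}{n})=\frac{\pi}{n}$ and $\arccos 1=0$, we obtain $0<u_0(n,N)<\frac{\pi}{n}$, and $\frac{\pi}{n}\le\frac{\pi}{3}$ since $n\ge 3$; this is (\ref{eq:u0-inequality}). For the limit (\ref{eq:u0-limiting-value}) I would note that, $U_{n-1}$ being strictly increasing on $[\cos\frac{\pi}{n},1]$, the quantity $x_0(n,N)=U_{n-1}^{-1}(N)$ is an increasing function of $N$ bounded above by $1$; hence it has a limit $L\le 1$ as $N\to n-0$, and continuity of $U_{n-1}$ gives $U_{n-1}(L)=n=U_{n-1}(1)$, whence $L=1$ by injectivity. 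Therefore $u_0(n,N)=\arccos[x_0(n,N)]\to\arccos 1=0$.

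I do not expect a serious obstacle: the argument is essentially bookkeeping with the shape of $|U_{n-1}|$ furnished by Lemma~\ref{lemma:u-zeros-extrema}. The only point needing attention is the exclusion of spurious solutions in the central interval $[-\cos\frac{\pi}{n},\cos\frac{\pi}{n}]$, which is precisely where the hypothesis $N>N_\mathrm{min}(n)$ — rather than merely $N>0$ — enters, and which is the reason $N_\mathrm{min}(n)$ is the natural lower threshold for the level $N$.
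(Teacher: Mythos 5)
Your argument matches the paper's proof essentially point for point: reduce to $x\ge 0$ by evenness, exclude the central interval $[0,\cos\frac{\pi}{n}]$ via $N>N_{\mathrm{min}}(n)$, locate the unique solution in $(\cos\frac{\pi}{n},1)$ using the strict monotonicity of $U_{n-1}$ from Lemma~\ref{lemma:u-zeros-extrema}, and read off (\ref{eq:u0-inequality}) through $\arccos$. Your explicit verification of the limit (\ref{eq:u0-limiting-value}) is a small gap-fill the paper leaves implicit; otherwise the proofs are the same.
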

\begin{proof}
Since $|U_{n-1}(x)|$ is even, it is sufficient to discuss solutions $x_0(n,N)$ belonging to $[0,1)$, and to show (\ref{eq:U-n-N-inequality}) for $0\le x\le 1$.
Eqn. (\ref{eq:x0-equation}) can have no solution in the subinterval $\big[0,\cos\frac{\pi}{n}\big)$ because $N>N_\mathrm{min}(n)\ge |U_{n-1}(x)|$ for all $x$ in the subinterval, see Lemma \ref{lemma:x0prime-and-n0-def}. There is a unique solution in $\left[\cos\frac{\pi}{n},1\right)$, however, because the left-hand side of (\ref{eq:x0-equation}) increases strictly from $0$ to $n$ (see the last assertion in Lemma~\ref{lemma:u-zeros-extrema}), while the right-hand side $N$ belongs to $(0,n)$. We have thus shown existence, uniqueness, (\ref{eq:U-n-N-inequality}), as well as the inequality 
\begin{equation*}
\cos\frac{\pi}{n}<x_0(n,N)<1,
\end{equation*}
which, via the definition (\ref{eq:u0-def}), gives
(\ref{eq:u0-inequality}).
\end{proof}

We can now describe the desired behavior of $g$ when $N<n$. Besides $N<n$, we also assume $N>N_\mathrm{min}(n)$.

\begin{proposition}
\label{proposition:g-for-Nsmall}
Let $N$ be such that $N_\mathrm{min}(n)<N<n$, where $N_\mathrm{min}(n)$ is defined in Lemma \ref{lemma:x0prime-and-n0-def}. Then the function $g_{n,N}(u)$
satisfies
\begin{equation}
\label{eq:g-positive}
g_{n,N}(u)
\begin{cases}
>0, \quad u_0(n,N)<|u|<\pi-u_0(n,N);\\
=0, \quad u=0,\mathrm{\ or\ } |u|=\pi,\mathrm{\ or\ } |u|=u_0(n,N),\mathrm{\ or\ } |u|=\pi-u_0(n,N);\\
<0,\quad 0<|u|<u_0(n,N),\mathrm{\ or\ }\pi-u_0(n,N)<|u|<\pi.
\end{cases}
\end{equation}
where $u_0(n,N)$ is defined in Lemma \ref{lemma:x0-u0-def}.
\end{proposition}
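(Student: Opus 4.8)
The plan is to rewrite $g_{n,N}(u)$ so that its sign on $(0,\pi)$ is governed by the single quantity $\bigl|U_{n-1}(\cos u)\bigr|$, which is precisely what Lemma~\ref{lemma:x0-u0-def} controls, and then to extend to $[-\pi,\pi]$ by symmetry. First I would record that $g_{n,N}$ is even in $u$ (both $\sin^2 u$ and $\sin^2(nu)$ are even), so it is enough to treat $u\in[0,\pi]$, and that $g_{n,N}(0)=g_{n,N}(\pm\pi)=0$ trivially since $\sin u=0$ there. (One may also note the reflection invariance $g_{n,N}(\pi-u)=g_{n,N}(u)$, which matches the symmetry of the claimed answer about $|u|=\pi/2$.)

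Next, by (\ref{eq:u-def}) with $k=n-1$ we have $\sin(nu)=U_{n-1}(\cos u)\sin u$, hence
\[
g_{n,N}(u)=\sin^2 u\,\Bigl[\,N^2-U_{n-1}^2(\cos u)\,\Bigr],\qquad u\in(0,\pi).
\]
Since $\sin^2 u>0$ on $(0,\pi)$ and $N^2-U_{n-1}^2(\cos u)=\bigl(N-|U_{n-1}(\cos u)|\bigr)\bigl(N+|U_{n-1}(\cos u)|\bigr)$ with the second factor strictly positive, the sign of $g_{n,N}(u)$ on $(0,\pi)$ agrees with the sign of $N-\bigl|U_{n-1}(\cos u)\bigr|$. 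As $u$ runs over $(0,\pi)$, $\cos u$ runs over $(-1,1)$, so I can invoke the trichotomy (\ref{eq:U-n-N-inequality}) of Lemma~\ref{lemma:x0-u0-def}: on $(0,\pi)$, $g_{n,N}(u)$ is positive, zero, or negative according as $|\cos u|<x_0(n,N)$, $|\cos u|=x_0(n,N)$, or $x_0(n,N)<|\cos u|<1$.

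It then remains to convert these conditions on $|\cos u|$ into conditions on $u$. Writing $x_0(n,N)=\cos u_0(n,N)$ with $u_0(n,N)\in(0,\pi/n)\subseteq(0,\pi/3)$ by (\ref{eq:u0-def}) and (\ref{eq:u0-inequality}), and using that $\cos$ is strictly decreasing on $[0,\pi]$, I would split $(0,\pi)$ at $\pi/2$: on $(0,\pi/2]$ one has $|\cos u|=\cos u$, while on $[\pi/2,\pi)$ one has $|\cos u|=\cos(\pi-u)$. In either half, $|\cos u|<\cos u_0(n,N)$ is equivalent to $u_0(n,N)<u<\pi-u_0(n,N)$; $|\cos u|=\cos u_0(n,N)$ to $u=u_0(n,N)$ or $u=\pi-u_0(n,N)$; and $|\cos u|>\cos u_0(n,N)$ to $0<u<u_0(n,N)$ or $\pi-u_0(n,N)<u<\pi$ (the inequalities $u_0(n,N)<\pi/2<\pi-u_0(n,N)$ guarantee these are nonempty intervals). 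Combining this with the evenness of $g_{n,N}$ to pass from $(0,\pi)$ to $[-\pi,\pi]$, and adjoining the trivial zeros at $u=0$ and $|u|=\pi$, yields exactly (\ref{eq:g-positive}).

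The argument is short precisely because the substantive work has already been carried out in Lemmas~\ref{lemma:u-zeros-extrema} and~\ref{lemma:x0-u0-def}; the only step requiring real care is the bookkeeping of absolute values when translating facts about $|\cos u|$ on $(0,\pi)$ into facts about $u$, which is why the split into $(0,\pi/2]$ and $[\pi/2,\pi)$ is the natural device. As a consistency check I would also verify the endpoint value $g_{n,N}(\pi/2)=N^2-\sin^2(n\pi/2)$, which is $N^2$ for $n$ even and $N^2-1$ for $n$ odd, hence positive since $N>N_\mathrm{min}(n)\ge 1$ — in agreement with $\pi/2$ lying strictly inside the positivity interval $\bigl(u_0(n,N),\pi-u_0(n,N)\bigr)$.
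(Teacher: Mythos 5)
Your proof is correct and matches the paper's argument almost exactly: both substitute $x=\cos u$, use (\ref{eq:u-def}) to factor $g_{n,N}$ as $(1-x^2)\bigl(N+|U_{n-1}(x)|\bigr)\bigl(N-|U_{n-1}(x)|\bigr)$, and then read off the sign from the trichotomy (\ref{eq:U-n-N-inequality}) in Lemma~\ref{lemma:x0-u0-def}. The only difference is cosmetic — you spell out the translation from conditions on $|\cos u|$ back to conditions on $u$ (the split of $(0,\pi)$ at $\pi/2$), which the paper handles implicitly via the equivalence between (\ref{eq:g-positive}) and its $x$-form (\ref{eq:temp2}).
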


\begin{proof}
Set $u=\arccos x$ in (\ref{eq:g-def}) and use (\ref{eq:u-def}) to get
\begin{equation}
\label{eq:temp1}
g_{n,N}(\arccos x)=(1-x^2)\Big(N+|U_{n-1}(x)|\Big)\Big(N-|U_{n-1}(x)|\Big),\quad -1\le x\le 1.
\end{equation}
By (\ref{eq:u0-def}), the desired inequality (\ref{eq:g-positive}) amounts to
\begin{equation}
\label{eq:temp2}
g_{n,N}(\arccos x)
\begin{cases}
>0,\quad |x|<x_0(n,N);\\
=0,\quad |x|=1,\mathrm{\ or\ }|x|=x_0(n,N);\\
<0,\quad x_0(n,N)<|x|<1.
\end{cases}
\end{equation}
which holds because the first factor in (\ref{eq:temp1}) is zero when $|x|=1$ and positive otherwise; the second is positive for all $|x|$; and the sign of the third is given~via~(\ref{eq:U-n-N-inequality}).
\end{proof}

\section{Main theorem; some curve properties}

This section presents a theorem that allows one to determine the level curves $L_{n,N}^{(k)}$ of $K_n(\rho)$, and then gives certain rudimentary properties  of the said curves. Given $k$, $n$, and $N$ (and as long as $N>N_\mathrm{min}(n)$), the theorem enables us to compute all complex values $\rho$ that give rise to a type-$k$ eigenvalue with a magnitude equal to $N$.  The desired values of $\rho$ form the range of a complex-valued function $f_{n,N}^{(1)}(u)$ [or $f_{n,N}^{(2)}(u)$], where $u$  belongs to $[-\pi, \pi]$, or to an explicitly defined subinterval of $[-\pi, \pi]$. The functions $f_{n,N}^{(1)}(u)$ and $f_{n,N}^{(2)}(u)$, finally,  are defined via the unique solution to a certain transcendental equation.

\begin{theorem}\label{th:main}
Let $N\in\mathbb{R}$ be such that $N_\mathrm{min}(n)<N<+\infty$, where $N_\mathrm{min}(n)$ is given in Lemma \ref{lemma:x0prime-and-n0-def}. For $\rho \in \mathbb{C}$, $K_{n}(\rho)$ possesses a type-1 eigenvalue $\lambda$ of magnitude $|\lambda|=N$ iff $\rho=f_{n,N}^{(1)}(u)$ and $\lambda=b_{n,N}^{(1)}(u)$ where
\begin{equation}\label{eq:rhosdef}
f_{n,N}^{(1)}(u)=\frac{\displaystyle\sin{\frac{(n+1)\mu(n,N,u)}{2}}}{\displaystyle\sin{\frac{(n-1)\mu(n,N,u)}{2}}},\qquad b_{n,N}^{(1)}(u)=-\frac{\sin[n\mu(n,N,u)]}{\sin\mu(n,N,u)},
\end{equation}
in which 
\begin{equation}\label{eq:mupar}
\mu(n,N,u)=u+i v(n,N,u).
\end{equation}
In \eqref{eq:mupar}, $u$ varies within $[-\pi, \pi]$ as specified below. With $g_{n,N}(u)$ defined in (\ref{eq:g-def}), the function $v(n,N,u)$ is the unique root of the transcendental equation
\begin{equation}\label{eq:ba}
\sinh^2(nv)-N^2 \sinh^2{v}=g_{n,N}(u),
\end{equation}
with the desired root belonging to $(0,+\infty)$ or  $[0,+\infty)$ as also specified below.\\

\noindent \textbf{Case 1:} When $N>n$, the $v(n,N,u)$ is the unique \textbf{positive} root of the transcendental equation (\ref{eq:ba}), for all $u\in[-\pi,\pi]$. Furthermore,
\begin{equation}
\label{eq:v-greater-v0}
v(n,N,u)\ge v_0(n,N),\quad u\in 
[-\pi,\pi],
\end{equation}
where $v_0(n,N)$ is defined in Lemma \ref{lemma:v0-def}. In (\ref{eq:v-greater-v0}), equality occurs iff $u=0$ or $|u|=\pi$.
\\

\noindent \textbf{Case 2:} When $N_\mathrm{min}(n)<N\le n$, the $v(n,N,u)$ is the unique \textbf{nonnegative} root of the transcendental equation (\ref{eq:ba}). Here, the values of $u$ are limited according to 
\begin{equation}
\label{eq:u-limited}
u_0(n,N)\le |u| \le \pi-u_0(n,N),
\end{equation}
in which $u_0(n,N)$ is defined in Lemma \ref{lemma:x0-u0-def} (when $N=n$, use the limiting value (\ref{eq:u0-limiting-value})).\\

Similarly, $K_{n}(\rho)$ possesses a borderline type-2 eigenvalue $\lambda$ iff $\rho=f_{n,N}^{(2)}(u)$ and $\lambda=b_{n,N}^{(2)}(u)$ where
\begin{equation}\label{eq:rhocdef}
f_{n,N}^{(2)}(u)=\frac{\cos{\displaystyle\frac{(n+1)\mu(n,N,u)}{2}}}{\cos{\displaystyle\frac{(n-1)\mu(n,N,u)}{2}}},\qquad b_{n,N}^{(2)}(u)=\frac{\sin[n\mu(n,N,u)]}{\sin\mu(n,N,u)},
\end{equation}
in which $u$, $v(n,N,u)$, and $\mu(n,N,u)$ are exactly the same as in the above-discussed type-1 case. 
\end{theorem}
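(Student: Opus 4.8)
The plan is to derive the theorem from Lemma~\ref{lemma:lemma3} by turning the magnitude condition $|\lambda|=N$ into the real equation \eqref{eq:ba}. First comes a bookkeeping step: replacing $\mu$ by $-\mu$, or by $\mu+2\pi$, leaves each of the four quotients in \eqref{eq:lambda} and \eqref{eq:eigen1eq} unchanged, because every numerator and denominator merely picks up a common sign. Hence each type-$k$ eigenvalue of $K_n(\rho)$ is produced by Lemma~\ref{lemma:lemma3} from at least one $\mu$ with $\operatorname{Re}\mu\in[-\pi,\pi]$ and $\operatorname{Im}\mu\ge0$, and conversely any such $\mu$ (with the quotients in \eqref{eq:eigen1eq}, \eqref{eq:lambda} read as removable limits where needed) yields through \eqref{eq:rhosdef} or \eqref{eq:rhocdef} a genuine type-$k$ eigenvalue. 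Writing $\mu=u+iv$ and using $\sin(a+ib)=\sin a\cosh b+i\cos a\sinh b$, one finds $|\sin(n\mu)|^2=\sin^2(nu)+\sinh^2(nv)$ and $|\sin\mu|^2=\sin^2 u+\sinh^2 v$, so that, whenever $\sin\mu\ne0$,
\[
\left|\frac{\sin(n\mu)}{\sin\mu}\right|=N\iff \sinh^2(nv)-N^2\sinh^2 v=N^2\sin^2 u-\sin^2(nu),
\]
which is exactly $h_{n,N}(v)=g_{n,N}(u)$, i.e.\ \eqref{eq:ba}. Since $|\lambda|$ is the same for the type-1 and type-2 formulas in \eqref{eq:lambda}, this single equation governs both $L_{n,N}^{(1)}$ and $L_{n,N}^{(2)}$: the functions $v(n,N,u)$ and $\mu(n,N,u)$ are common to the two cases, and only the $\rho$-formulas \eqref{eq:rhosdef} and \eqref{eq:rhocdef} differ. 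The theorem is thereby reduced to the following questions, for $u$ ranging over $[-\pi,\pi]$: when does \eqref{eq:ba} admit a nonnegative (or positive) solution $v$, is it unique, and where is it located?

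These questions are settled by the propositions of Section~3. In \textbf{Case~1} ($N>n$), Proposition~\ref{proposition:g-for-large-N} gives $g_{n,N}(u)\ge0$ on all of $[-\pi,\pi]$, while Proposition~\ref{proposition:h-for-large-N} shows that $h_{n,N}$ is $\le0$ on $[0,v_0(n,N)]$ (vanishing only at $0$ and at $v_0(n,N)$) and increases strictly from $0$ to $+\infty$ on $[v_0(n,N),+\infty)$. Hence \eqref{eq:ba} has a unique positive root $v(n,N,u)$, which satisfies $v(n,N,u)\ge v_0(n,N)$, with equality precisely when $g_{n,N}(u)=0$, i.e.\ when $u=0$ or $|u|=\pi$; this is \eqref{eq:v-greater-v0}. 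In \textbf{Case~2} ($N_\mathrm{min}(n)<N\le n$), Proposition~\ref{proposition:h-for-Nsmall} (and, for $N=n$, the analogous fact obtained from \eqref{eq:u-hyperbolic-def}, \eqref{eq:u-increases} and Corollary~\ref{corollary:trig-forms}) shows that $h_{n,N}$ maps $[0,+\infty)$ strictly increasingly and bijectively onto $[0,+\infty)$; therefore \eqref{eq:ba} has a unique nonnegative root iff $g_{n,N}(u)\ge0$, and by Proposition~\ref{proposition:g-for-Nsmall} this holds exactly on $u_0(n,N)\le|u|\le\pi-u_0(n,N)$ together with the isolated points $u=0$ and $|u|=\pi$ (when $N=n$, $u_0(n,n)=0$ by \eqref{eq:u0-limiting-value} absorbs those points and \eqref{eq:u-limited} becomes $0\le|u|\le\pi$).

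It remains to reconcile the ``positive/nonnegative root'' prescription with the degenerate case $\sin\mu=0$, which occurs only at $u\in\{0,\pm\pi\}$ with $v=0$. There the quotient in \eqref{eq:lambda} is a removable limit with $|\lambda|=n$, so such $\rho$ lie on $L_{n,N}^{(k)}$ only for $N=n$. When $N>n$ this is handled automatically by insisting on a \emph{positive} root (the spurious root $v=0$ of \eqref{eq:ba} at $u=0,|u|=\pi$ being discarded), and when $N<n$ it forces $u=0$ and $|u|=\pi$ out of \eqref{eq:u-limited}. At the other end of the range, the points $|u|=u_0(n,N)$ and $|u|=\pi-u_0(n,N)$ give $v=0$ and real $\mu=u$, with $|\lambda|=|\sin(nu)/\sin u|=|U_{n-1}(\cos u)|=N$ by the defining relations \eqref{eq:x0-equation}--\eqref{eq:u0-def} of $u_0(n,N)$; so the closed interval \eqref{eq:u-limited} is exactly correct. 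Assembling these facts proves the type-1 assertion; the type-2 assertion is word-for-word the same, the only modification being that the $u$ at which $f_{n,N}^{(2)}$ calls for a removable-limit reading are those with $\cos\frac{(n-1)\mu}{2}=0$ rather than $\sin\frac{(n-1)\mu}{2}=0$.

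The main obstacle I expect is exactly this boundary bookkeeping: verifying that the symmetry reduction to $\operatorname{Re}\mu\in[-\pi,\pi]$, $\operatorname{Im}\mu\ge0$ captures every relevant $\rho$; that the selected root of \eqref{eq:ba} matches $|\lambda|=N$ both at and near the points where $\sin\mu=0$; and that the formulas \eqref{eq:rhosdef} and \eqref{eq:rhocdef} stay well defined (with removable singularities filled in) throughout the stated $u$-ranges. The analytic core --- the equivalence with \eqref{eq:ba} and the existence, uniqueness and location of $v(n,N,u)$ --- is immediate once the groundwork of Sections~2 and~3 is in place.
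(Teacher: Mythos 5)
Your proposal is correct and follows essentially the same route as the paper's proof: apply Lemma~\ref{lemma:lemma3} with $\mu=u+iv$, reduce $|\lambda|=N$ to $h_{n,N}(v)=g_{n,N}(u)$ via $|\sin(u+iv)|^2=\sin^2 u+\sinh^2 v$, restrict to $u\in[-\pi,\pi]$, $v\ge0$ by periodicity and the $(u,v)\mapsto(-u,-v)$ symmetry, and then invoke Propositions~\ref{proposition:h-for-Nsmall}, \ref{proposition:h-for-large-N}, \ref{proposition:g-for-large-N}, and~\ref{proposition:g-for-Nsmall} to control existence, uniqueness, and location of $v(n,N,u)$. The one organizational difference is that the paper dispenses with the case $N=n$ by citing Theorem~4.1 and Lemma~4.2(ii) of \cite{FikMavr} and assumes $N\ne n$ throughout, whereas you fold $N=n$ into Case~2 by observing that $h_{n,n}$ is still strictly increasing on $[0,\infty)$ and that $u_0(n,n)=0$; your version is self-contained but should state explicitly that $g_{n,n}(u)\ge0$ for all $u$ (by Corollary~\ref{corollary:trig-forms}), since Proposition~\ref{proposition:g-for-Nsmall} as stated requires $N<n$.
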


\begin{proof} 
For $N=n$, the theorem reduces to Theorem 4.1 of \cite{FikMavr} and Lemma 4.2(ii) of \cite{FikMavr}. We thus assume that $N\ne n$ throughout. Accordingly, Case 2 amounts to $N_\mathrm{min}(n)<N<n$.

In Lemma \ref{lemma:lemma3}, take $|\lambda|=N$ and  
set $u={\rm Re}\mu$ and $v={\rm Im}\mu$ to see that $K_{n}(\rho)$ has a borderline type-1 eigenvalue $\lambda$ iff $\rho=\rho_{n,N}(u,v)$ and $\lambda=\lambda_{n,N}(u,v)$ where
\begin{equation}\label{eq:rho1uv}
\rho_{n,N}(u,v)=\frac{\displaystyle\sin{\frac{(n+1)u}{2}}\cosh{\frac{(n+1)v}{2}}+i\cos{\frac{(n+1)u}{2}}\sinh{\frac{(n+1)v}{2}}}
{\displaystyle\displaystyle\sin{\frac{(n-1)u}{2}}\cosh{\frac{(n-1)v}{2}}+i\cos{\frac{(n-1)u}{2}}\sinh{\frac{(n-1)v}{2}}},\quad u,v \in \mathbb{R},
\end{equation}
and
\begin{equation}\label{eq:lambdannn}
\lambda_{n,N}(u,v)=-\frac{\sin[n(u+iv)]}{\sin(u+iv)}
,\quad u,v \in \mathbb{R},
\end{equation}
where $u$, $v$, and $n$ are interrelated via
\begin{equation}\label{eq:uv}
N^2=\frac{\sin^2(nu)+\sinh^2(nv)}{\sin^2{u}+\sinh^2{v}}, \quad u,v \in \mathbb{R}.
\end{equation}
Since the right-hand sides of (\ref{eq:rho1uv})--(\ref{eq:uv}) are $2\pi$-periodic in $u$, we assume $u \in [-\pi, \pi]$ with no loss of generality. Since, also,  $\rho_{n,N}(-u,-v)=\rho_{n,N}(u,v)$ and $\lambda_{n,N}(-u,-v)=\lambda_{n,N}(u,v)$, we further assume $v \in [0,+\infty)$. We must now consider the two cases separately.\\

\noindent \textbf{Case 1:} When $N>n$, 
we limit ourselves to $v \in (0,+\infty)$: For $v=0$, the right-hand side of the transcendental equation (\ref{eq:uv}) is smaller than or equal to $n^2$ by Corollary~\ref{corollary:trig-forms}; and $n^2$ is, in turn, smaller than $N^2$. 
Thus no $u\in [-\pi,\pi]$ can satisfy (\ref{eq:uv})  and we take $v>0$. 

By Proposition \ref{proposition:g-for-large-N}, the function $g_{n,N}(u)$ in the right-hand side of (\ref{eq:ba}) is nonnegative.
By Proposition~\ref{proposition:h-for-large-N}, when $v>0$ the function $h_{n,N}(v)$ in the left-hand side of (\ref{eq:ba}) is nonnegative only when $v\in [v_0(n,N),\infty)$, and is strictly increasing there. 
We have thus shown that (\ref{eq:ba}) has a unique positive solution $v$---which we denote by $v=v(n,N,u)$---and we have also shown (\ref{eq:v-greater-v0}).
In (\ref{eq:v-greater-v0}), equality occurs iff $g(n,N)(u)=0$ which, by Proposition \ref{proposition:g-for-large-N}, is equivalent to $u=0$ or $|u|=\pi$.

For Case~1, (\ref{eq:uv}) is therefore equivalent to the definition \eqref{eq:g-def} and the transcendental equation \eqref{eq:ba}, with $v\in (0,+\infty)$ and $u\in [-\pi,\pi]$. \\

\noindent \textbf{Case 2:} When $N_\mathrm{min}(n)<N<n$ our reasoning is similar.  To begin with, we exclude the values $u=0$ and $|u|=\pi$, for which the right-hand side of (\ref{eq:uv}) is larger than or equal to $n^2$ by Corollary \ref{corollary:trig-forms}.  

By Proposition \ref{proposition:h-for-Nsmall}, the function $h_{n,N}(v)$ on the left-hand side of the transcendental equation (\ref{eq:ba}) is non-negative and strictly increasing. Thus, (\ref{eq:ba}) has a unique solution iff $g_{n,N}(u)\ge 0$ which, by Proposition \ref{proposition:g-for-Nsmall}, is equivalent to $u=0$, $|u|=\pi$, or $u_0(n,N)\le |u|\le \pi-u_0(n,N)$. As the first two cases have been excluded, we have shown (\ref{eq:u-limited}). Thus in Case~2 too, (\ref{eq:uv}) is equivalent to \eqref{eq:g-def} and \eqref{eq:ba}; but here, $v\in[0,\infty)$ and $|u|$ varies in the interval specified in (\ref{eq:u-limited}).\\

With $v=v(n,N,u)$ thus determined, the $\rho_{n,N}(u,v)$  of (\ref{eq:rho1uv}) and the $\lambda_{n,N}(u,v)$ of (\ref{eq:lambdannn}) are no longer functions of $v$, and the notations $f_{n,N}^{(1)}(u)=\rho_{n,N}(u,v)$ and $b_{n,N}^{(1)}(u)=\lambda_{n,N}(u,v)$ prove (\ref{eq:rhosdef}) with (\ref{eq:mupar}). We have thus shown all assertions pertaining to type-1 eigenvalues. 

For the type-2 case, proceed as before with 
\begin{equation*}
\rho_{n,N}(u,v)=\frac{\displaystyle\cos{\frac{(n+1)u}{2}}\cosh{\frac{(n+1)v}{2}}-i\sin{\frac{(n+1)u}{2}}\sinh{\frac{(n+1)v}{2}}}
{\displaystyle\displaystyle\cos{\frac{(n-1)u}{2}}\cosh{\frac{(n-1)v}{2}}-i\sin{\frac{(n-1)u}{2}}\sinh{\frac{(n-1)v}{2}}},\quad u,v \in \mathbb{R},
\end{equation*}
%
\end{proof}

For brevity, denote by $R(n,N)$ the range in which $u$ is supposed to vary in Cases~1 and 2, so that
\begin{equation}
\label{eq:range-of-u}
R(n,N)=\begin{cases}[-\pi,\pi],\quad N>n\\
[-\pi+u_0(n,N),-u_0(n,N)] \cup [u_0(n,N),\pi-u_0(n,N)],\  N_\mathrm{min}(n)<N\le n
\end{cases}
\end{equation}
We this notation, we can re-state the essence of Theorem~\ref{th:main} as follows.
\begin{corollary}
\label{th:curves}
Let $N\in\mathbb{R}$ with $N>N_\mathrm{min}(n)$ and let $k=1$ or $k=2$. 
The matrix $K_{n}(\rho)$ possesses a type-$k$ eigenvalue of magnitude $N$ iff $\rho \in L_{n,N}^{(k)}$, where $L_{n,N}^{(k)}$ is the curve given by
\begin{equation}\label{eq:Ls}
L_{n,N}^{(k)}=\left\{\rho \in \mathbb{C}: \rho=f^{(k)}_{n,N}(u)\mathrm{\ for\ some\ } u\in R(n,N)\right\},
\end{equation}
in which $ f^{(k)}_{n,N}(u)$ is defined in Theorem \ref{th:main}. 
\end{corollary}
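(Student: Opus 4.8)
The plan is to show that Corollary \ref{th:curves} is a near-immediate consequence of Theorem \ref{th:main}, once one unwinds the notation $R(n,N)$ and $L_{n,N}^{(k)}$. First I would fix $N>N_\mathrm{min}(n)$ and $k\in\{1,2\}$, and recall that Theorem \ref{th:main} asserts the equivalence: $K_n(\rho)$ has a type-$k$ eigenvalue of magnitude $N$ iff $\rho=f^{(k)}_{n,N}(u)$ (and $\lambda=b^{(k)}_{n,N}(u)$) for some admissible $u$. So the only thing to verify is that ``$u$ admissible'' coincides exactly with ``$u\in R(n,N)$'', and that the set of all such $\rho$ is precisely the curve defined in \eqref{eq:Ls}.

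The key steps, in order, are the following. (1) Split into the two cases $N>n$ and $N_\mathrm{min}(n)<N\le n$, matching Cases 1 and 2 of Theorem \ref{th:main}. (2) In Case 1, Theorem \ref{th:main} allows all $u\in[-\pi,\pi]$, and \eqref{eq:range-of-u} gives $R(n,N)=[-\pi,\pi]$ there, so the admissible set of $u$ is exactly $R(n,N)$. (3) In Case 2, Theorem \ref{th:main} restricts $u$ via \eqref{eq:u-limited}, i.e. $u_0(n,N)\le|u|\le\pi-u_0(n,N)$; I would observe that this is precisely the union $[-\pi+u_0(n,N),-u_0(n,N)]\cup[u_0(n,N),\pi-u_0(n,N)]$, which is the second branch of \eqref{eq:range-of-u}, so again the admissible set equals $R(n,N)$. (4) Conclude that in both cases $K_n(\rho)$ has a type-$k$ eigenvalue of magnitude $N$ iff $\rho=f^{(k)}_{n,N}(u)$ for some $u\in R(n,N)$, which is exactly the statement ``$\rho\in L^{(k)}_{n,N}$'' with $L^{(k)}_{n,N}$ as in \eqref{eq:Ls}. (5) Finally, note that the word ``curve'' is justified because $f^{(k)}_{n,N}$ is a continuous complex-valued function on the (compact) parameter set $R(n,N)$ — continuity following from the continuity of $u\mapsto v(n,N,u)$, which is in turn inherited from the strict monotonicity of $h_{n,N}$ established in Propositions \ref{proposition:h-for-Nsmall} and \ref{proposition:h-for-large-N} together with the implicit function theorem applied to \eqref{eq:ba}.

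I expect there to be no genuine obstacle here: the corollary is essentially a repackaging of Theorem \ref{th:main}, and the only mild subtlety is the set-theoretic bookkeeping in step (3), where one must be careful that the description ``$u_0\le|u|\le\pi-u_0$'' and the explicit union of two closed intervals in \eqref{eq:range-of-u} really denote the same subset of $[-\pi,\pi]$ (in particular that the endpoints $\pm u_0$ and $\pm(\pi-u_0)$ are included on both sides, which they are). If anything needs care, it is making sure the identification of $b^{(k)}_{n,N}(u)$ as the eigenvalue is not lost: the corollary only records the $\rho$-statement, so I would explicitly say that the eigenvalue information from Theorem \ref{th:main} is suppressed in the corollary but remains available. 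One could optionally remark that $f^{(1)}_{n,N}$ and $f^{(2)}_{n,N}$ use the same $\mu(n,N,u)$, so the two curves $L^{(1)}_{n,N}$ and $L^{(2)}_{n,N}$ are parametrized over the identical set $R(n,N)$; this is immediate from the last sentence of Theorem \ref{th:main} and needs no further argument.
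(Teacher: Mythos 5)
Your proof is correct and matches the paper's own treatment: the paper offers no proof for Corollary~\ref{th:curves} at all, introducing it merely with ``we can re-state the essence of Theorem~\ref{th:main} as follows,'' which is precisely your observation that it is a repackaging of the theorem after identifying the admissible $u$-set with $R(n,N)$ in each case of \eqref{eq:range-of-u}. Your additional remarks on continuity of $u\mapsto v(n,N,u)$ justifying the word ``curve'' and on the suppression of the eigenvalue information $b^{(k)}_{n,N}(u)$ are sensible but go beyond what the paper records.
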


The proposition that follows gives some elementary properties of the $L_{n,N}^{(k)}$. 
\begin{proposition}\label{th:symmetries}
The level curves $L_{n,N}^{(1)}$ and $L_{n,N}^{(2)}$ exhibit the following properties.\\
(i) For $k=1$ and $k=2$, $L_{n,N}^{(k)}$ intersects the real axis exactly twice.\\
(ii) Both $L_{n,N}^{(1)}$ and $L_{n,N}^{(2)}$ are symmetric with respect to the real $\rho$-axis.\\
(iii) The union $L_{n,N}^{(1)}\cup L_{n,N}^{(2)}$ is symmetric with respect to the origin $\rho=0$.\\
(iv) For $n=3,5,7\ldots$, both $L_{n,N}^{(1)}$ and $L_{n,N}^{(2)}$ are symmetric with respect to the imaginary $\rho$-axis.\\
(v) For $n=4,6,8\ldots$,  $L_{n,N}^{(1)}$ and $L_{n,N}^{(2)}$ are mirror images of one another with respect to the imaginary $\rho$-axis.
\end{proposition}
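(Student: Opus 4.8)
Following the proof of Theorem~\ref{th:main} I would assume $N\ne n$ (the case $N=n$ being contained in \cite{FikMavr}), so that Case~2 means $N_\mathrm{min}(n)<N<n$. The mechanism behind all five parts is the observation that $g_{n,N}(u)=N^2\sin^2u-\sin^2(nu)$ is \emph{even} and \emph{$\pi$-periodic}. Since $v(n,N,u)$ is the unique (non-negative) root of (\ref{eq:ba}), $v(n,N,\cdot)$ inherits these two symmetries, and the range $R(n,N)$ of (\ref{eq:range-of-u}) is --- modulo the $2\pi$-periodicity of $f_{n,N}^{(k)}$ in $u$ --- invariant under each of $u\mapsto-u$, $u\mapsto u+\pi$, $u\mapsto\pi-u$. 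To this I would add two elementary facts about the entire functions $\sin,\cos$: they have real Taylor coefficients, so $\sin\bar z=\overline{\sin z}$ and $\cos\bar z=\overline{\cos z}$; and $\tfrac{(n\pm1)\pi}{2}$ is an integer multiple of $\tfrac\pi2$, so shifting the argument of $\sin\tfrac{(n\pm1)\mu}{2}$ or $\cos\tfrac{(n\pm1)\mu}{2}$ by $\tfrac{(n\pm1)\pi}{2}$ merely changes the value by a sign when $n$ is odd and swaps $\sin\leftrightarrow\cos$ (up to sign) when $n$ is even, via the addition formulas.

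For \emph{(ii)} I would check $\mu(n,N,-u)=-\overline{\mu(n,N,u)}$, substitute into (\ref{eq:rhosdef})--(\ref{eq:rhocdef}), and use evenness of $\sin,\cos$ together with $\sin\bar z=\overline{\sin z}$, $\cos\bar z=\overline{\cos z}$ to obtain $f_{n,N}^{(k)}(-u)=\overline{f_{n,N}^{(k)}(u)}$ ($k=1,2$); as $-u\in R(n,N)$, each curve is its own mirror image in the real axis. For \emph{(iii)}--\emph{(v)} I would feed $\mu(n,N,u+\pi)=\mu(n,N,u)+\pi$ and $\mu(n,N,\pi-u)=\pi-\overline{\mu(n,N,u)}$ into (\ref{eq:rhosdef})--(\ref{eq:rhocdef}) and invoke the $\tfrac\pi2$-shift remark: for $n$ odd this yields $f_{n,N}^{(k)}(u+\pi)=-f_{n,N}^{(k)}(u)$ and $f_{n,N}^{(k)}(\pi-u)=-\overline{f_{n,N}^{(k)}(u)}$, i.e. origin symmetry and imaginary-axis symmetry of each curve separately (statements (iii) and (iv)); for $n$ even the same computations instead interchange the superscripts, $f_{n,N}^{(1)}(u+\pi)=-f_{n,N}^{(2)}(u)$ and $f_{n,N}^{(1)}(\pi-u)=-\overline{f_{n,N}^{(2)}(u)}$ (and symmetrically with $1$ and $2$ swapped), which is exactly (v) together with the remaining case of (iii).

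Part \emph{(i)} is the substantive step. First I would turn ``$f_{n,N}^{(k)}(u)\in\mathbb R$'' into an algebraic criterion: using (ii), $2i\,{\rm Im}\,f_{n,N}^{(1)}(u)=f_{n,N}^{(1)}(u)-f_{n,N}^{(1)}(-u)$, and a product-to-sum computation (with $\mu_\pm=\pm u+iv$, $v=v(n,N,u)\ge0$) collapses the common-denominator numerator to $i\bigl(\sin u\,\sinh(nv)-\sin(nu)\,\sinh v\bigr)$ over the non-positive real quantity $-2\,\bigl|\sin\tfrac{(n-1)\mu_+}{2}\bigr|^2$; the analogous computation for $k=2$ ($\cos A\cos B$ replacing $\sin A\sin B$) flips one sign. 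Hence $f_{n,N}^{(k)}(u)$ is real iff $\sin u\,\sinh(nv)=(-1)^{k-1}\sin(nu)\,\sinh v$. If simultaneously $\sin u\ne0$ and $v>0$, this forces $\bigl|\tfrac{\sin(nu)}{\sin u}\bigr|=\tfrac{\sinh(nv)}{\sinh v}$, which is impossible because the left side equals $|U_{n-1}(\cos u)|<n$ (Lemma~\ref{lemma:u-zeros-extrema}, since $\cos u\in(-1,1)$) while the right side exceeds $n$ (Corollary~\ref{corollary:trig-forms}). Therefore $f_{n,N}^{(k)}(u)$ can be real only where $\sin u\,\sinh v=0$: within $R(n,N)$ this means $u\in\{0,\pm\pi\}$ when $N>n$ (there $v(n,N,u)\ge v_0(n,N)>0$, so $\sin u$ must vanish) and $u\in\{\pm u_0(n,N),\,\pm(\pi-u_0(n,N))\}$ when $N<n$ (there $\sin u\ne0$, so $v(n,N,u)$ must vanish, which by Proposition~\ref{proposition:g-for-Nsmall} happens exactly there). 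Evaluating $f_{n,N}^{(k)}$ at these values (now $\mu$ is real, so again via the $\sin/\cos$ facts of the first paragraph) produces exactly two real numbers; they are distinct because $f_{n,N}^{(k)}(0)>0>f_{n,N}^{(k)}(\pi)$ in Case~1, and in Case~2 --- using $f_{n,N}^{(k)}(\pi-u_0)=\pm f_{n,N}^{(k)}(u_0)$, or the explicit formulas when $n$ is even --- because $\sin\!\bigl(n\,u_0(n,N)\bigr)\ne0$ (as $0<n\,u_0(n,N)<\pi$ by (\ref{eq:u0-inequality})) and $0\notin L_{n,N}^{(k)}$ (as $K_n(0)=I_n$ has only the eigenvalue $1$, of magnitude $1<N$ since $N>N_\mathrm{min}(n)\ge1$).

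The main obstacle is part (i): extracting the clean criterion $\sin u\,\sinh(nv)=(-1)^{k-1}\sin(nu)\,\sinh v$ and then playing the Chebyshev bound $|U_{n-1}(\cos u)|<n$ against the growth estimate $\sinh(nv)/\sinh v>n$ to eliminate every interior solution; once the functional equations of the second paragraph are available, the parity bookkeeping that separates (iv) from (v) and the final distinctness check are routine.
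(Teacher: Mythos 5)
Your plan is correct and is essentially the argument the paper has in mind: the paper offers no proof of its own, merely citing Proposition~4.4 of \cite{FikMavr}, and the route you take (symmetries of $g_{n,N}$ and $v(n,N,\cdot)$ pushed through the parametrization $f_{n,N}^{(k)}$, plus the explicit sign of $\operatorname{Im}f_{n,N}^{(k)}(u)$, which is precisely Lemma~\ref{lemma:split-plane}) is the natural reconstruction of that referenced proof. Your treatment of (ii)--(v) via $\mu(n,N,-u)=-\overline{\mu}$, $\mu(n,N,u+\pi)=\mu+\pi$, $\mu(n,N,\pi-u)=\pi-\overline{\mu}$ and the parity of $(n\pm1)/2$ is exactly right, and your criterion $\sin u\,\sinh(nv)=(-1)^{k-1}\sin(nu)\,\sinh v$ together with $|U_{n-1}(\cos u)|<n<\sinh(nv)/\sinh v$ for $\sin u\ne0$, $v>0$ is the core of (i). One spot worth spelling out: in Case~2 with $n$ even, the relation $f_{n,N}^{(k)}(\pi-u_0)=\pm f_{n,N}^{(k)}(u_0)$ is not available (the superscript flips instead), so distinctness of the two real endpoint values should be seen directly from the real formulas at $\mu=u_0$ and $\mu=\pi-u_0$: cross-multiplying $f^{(1)}(u_0)=f^{(1)}(\pi-u_0)$ (or the $k=2$ analogue) and applying the sine addition formula collapses to $\sin(nu_0)=0$, which \eqref{eq:u0-inequality} forbids; this is precisely the role of your observation $\sin(nu_0)\ne0$, and it is worth stating as the mechanism rather than leaving it as ``the explicit formulas.''
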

\begin{proof}
The proof is very similar to the proof of Proposition 4.4 of \cite{FikMavr}. 
\end{proof}

We end this section with a lemma that we will use shortly. It states that positive (negative) $u$-values correspond to points  $L_{n,N}^{(k)}$ that lie in the lower-half (upper-half) plane. It also gives the $u$-values for which the intersections with the real axis occur. 
\begin{lemma}
\label{lemma:split-plane}
Let $u\in R(n,N)$, let $N>N_\mathrm{min}(n)$, and let $k=1$ or $k=2$. Then
\begin{equation}
u>0\iff \mathrm{Im}f_{n,N}^{(k)}(u)<0
\end{equation}
Furthermore, the $u$-values for which $L_{n,N}^{(k)}$ intersects the real axis are $u=0$ and $|u|=\pi$ in Case~1 ($N>n$), and $|u|= u_0(n,N)$ and $|u|= \pi-u_0(n,N)$ in Case~2 ($N_\mathrm{min}<N<n$).
\end{lemma}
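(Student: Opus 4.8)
The plan is to extract the imaginary part of $f_{n,N}^{(k)}(u)$ directly from the real/imaginary decomposition of $\rho_{n,N}(u,v)$ already recorded in the proof of Theorem~\ref{th:main}. For the type-1 case, write $f_{n,N}^{(1)}(u)=\rho_{n,N}(u,v)$ with $v=v(n,N,u)\ge 0$, and compute $\mathrm{Im}\,\rho_{n,N}(u,v)$ by rationalizing the quotient in \eqref{eq:rho1uv}: multiplying numerator and denominator by the conjugate of the denominator, the denominator becomes $\sin^2\frac{(n-1)u}{2}\cosh^2\frac{(n-1)v}{2}+\cos^2\frac{(n-1)u}{2}\sinh^2\frac{(n-1)v}{2}$, which is strictly positive (it vanishes only if $\sin\frac{(n-1)u}{2}=0$ \emph{and} $v=0$, and one checks this never happens on $R(n,N)$ --- in Case~1 because $v>0$, in Case~2 because the excluded $u$-values are exactly where $v=0$). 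So the sign of $\mathrm{Im}f_{n,N}^{(1)}(u)$ equals the sign of the imaginary part of the numerator times the conjugate denominator. A short trigonometric/hyperbolic simplification of that bilinear expression should collapse, via product-to-sum and the identities $\cos A\sinh B - \sin A \cosh B$ combinations, to something proportional to $\sin u \cdot (\text{positive factor}) $ or more precisely to a term of the form $\bigl[\sin u\,\cosh(nv)\cdots\bigr]$ with a manifestly nonnegative coefficient; I expect it to reduce to a constant multiple of $\sin u$ times a strictly positive function of $(u,v)$ on $R(n,N)$. Granting that, $u>0 \iff \sin u > 0$ on the relevant range (note $R(n,N)\subseteq(-\pi,\pi)$ up to endpoints, and on $[-\pi,\pi]$ one has $\sin u>0\iff 0<u<\pi$; the endpoints $u=0,\pm\pi$ give $\sin u=0$), which yields the claimed equivalence.

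For the type-2 case, repeat the computation with the cosine-numerator/cosine-denominator formula displayed at the end of the proof of Theorem~\ref{th:main}; by the standard substitutions $u\mapsto u$ in $\cos$ versus $\sin$, the same rationalization produces a denominator $\cos^2\frac{(n-1)u}{2}\cosh^2\frac{(n-1)v}{2}+\sin^2\frac{(n-1)u}{2}\sinh^2\frac{(n-1)v}{2}>0$ and an imaginary part again proportional to $\sin u$ times a positive factor. Alternatively, one can avoid a second computation by invoking the symmetry relations between the $k=1$ and $k=2$ curves: Proposition~\ref{th:symmetries}(iv)--(v) relates $L_{n,N}^{(2)}$ to $L_{n,N}^{(1)}$ by a reflection across the imaginary axis (or, for odd $n$, each curve is symmetric about it), and such reflections preserve the sign of $\mathrm{Im}\,\rho$; combined with the fact that the parametrizations are matched through the same $u$, this transports the type-1 conclusion to type-2. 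I would present the direct computation as the main argument and mention the symmetry shortcut as a remark.

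Finally, for the statement about real-axis intersections: a point of $L_{n,N}^{(k)}$ is real iff $\mathrm{Im}f_{n,N}^{(k)}(u)=0$, which by the sign computation above happens iff $\sin u=0$ or the accompanying positive factor degenerates --- but that factor is strictly positive on $R(n,N)$, so the condition is $\sin u=0$, i.e. $u\in\{0,\pi,-\pi\}$. In Case~1 all three lie in $R(n,N)=[-\pi,\pi]$, giving the values $u=0$ and $|u|=\pi$ (and $\pm\pi$ yield the same point by $2\pi$-periodicity, consistent with Proposition~\ref{th:symmetries}(i) that there are exactly two real intersections). In Case~2, none of $0,\pm\pi$ lies in $R(n,N)$ since $R(n,N)$ excludes a neighborhood of each; here instead one must recall that the endpoints $|u|=u_0(n,N)$ and $|u|=\pi-u_0(n,N)$ are exactly the points where $v(n,N,u)=0$ (by Proposition~\ref{proposition:g-for-Nsmall} and \eqref{eq:ba}, $v=0\iff g_{n,N}(u)=0\iff |u|\in\{u_0,\pi-u_0\}$ on $R(n,N)$), and when $v=0$ the formula \eqref{eq:rho1uv} gives $\rho_{n,N}(u,0)=\sin\frac{(n+1)u}{2}/\sin\frac{(n-1)u}{2}\in\mathbb{R}$; conversely when $v>0$ the positive-factor argument forbids $\mathrm{Im}f=0$ unless $\sin u=0$, which is impossible on $R(n,N)$. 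So in Case~2 the real intersections occur precisely at $|u|=u_0(n,N)$ and $|u|=\pi-u_0(n,N)$.

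The main obstacle is the bilinear trigonometric-hyperbolic simplification in the first paragraph: one must verify that after rationalization the imaginary part really does factor as $\sin u$ times a function that is \emph{strictly} positive everywhere on $R(n,N)$ (including near the endpoints of Case~2, where $v\to 0$ and the would-be positive factor could in principle vanish). Handling the $v=0$ boundary carefully --- and noting that there $\mathrm{Im}f=0$ anyway because $\rho$ is real, which is consistent rather than contradictory --- is the delicate point; everywhere in the open range $R(n,N)$ with $v>0$ the strict positivity should follow from $\cosh(nv)>1$ and elementary sign bookkeeping.
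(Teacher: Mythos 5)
Your overall plan---rationalize \eqref{eq:rho1uv}, compute $\mathrm{Im}$, and read off the sign---is the natural route, and it is almost certainly what Lemma~4.2 of \cite{FikMavr} (which the paper cites verbatim for this lemma) does. But the decisive step is left as a guess, and the guess is not quite right: carrying out the product-to-sum simplification gives, with the shorthand $a=\tfrac{(n+1)u}{2}$, $b=\tfrac{(n-1)u}{2}$, $p=\tfrac{(n+1)v}{2}$, $q=\tfrac{(n-1)v}{2}$,
\begin{equation*}
\mathrm{Im}\,\rho_{n,N}(u,v)
=\frac{\sin(nu)\,\sinh v-\sin u\,\sinh(nv)}{2\bigl(\sin^2 b\,\cosh^2 q+\cos^2 b\,\sinh^2 q\bigr)}
\end{equation*}
for $k=1$, and the analogous computation for $k=2$ yields $-\bigl[\sin(nu)\sinh v+\sin u\,\sinh(nv)\bigr]$ in the numerator. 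Neither numerator is of the form ``a constant times $\sin u$ times a manifestly nonnegative factor,'' and the sign cannot be settled by ``$\cosh(nv)>1$ and elementary sign bookkeeping'' as you propose. What is actually needed is the pair of bounds from Corollary~\ref{corollary:trig-forms}: for $u\in\mathbb{R}$ with $\sin u\ne 0$ one has $|\sin(nu)/\sin u|\le n$, while for $v>0$ one has $\sinh(nv)/\sinh v>n$. Writing, say for $k=1$ and $\sin u\ne 0$, $v>0$,
\begin{equation*}
\sin(nu)\sinh v-\sin u\,\sinh(nv)
=\sin u\,\sinh v\left[\frac{\sin(nu)}{\sin u}-\frac{\sinh(nv)}{\sinh v}\right],
\end{equation*}
the bracket is strictly negative by the two bounds, which gives the sign and, upon specializing, the real-axis-intersection statement. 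Without explicitly invoking Corollary~\ref{corollary:trig-forms} the argument does not close; this is the same pair of inequalities that drives Propositions~\ref{proposition:h-for-Nsmall} and \ref{proposition:g-for-large-N}, so it is not incidental.

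Two smaller issues. First, in Case~2 your justification that $C^2+D^2>0$ (``the excluded $u$-values are exactly where $v=0$'') misreads the parameter range: the endpoints $|u|=u_0(n,N)$ and $|u|=\pi-u_0(n,N)$, where $v=0$, are \emph{included} in $R(n,N)$, so you still must check $\sin\frac{(n-1)u}{2}\ne 0$ there. This does hold (from \eqref{eq:u0-inequality} one has $0<\frac{(n-1)u_0}{2}<\frac{\pi}{2}$, and the $\pi-u_0$ case reduces to this by parity of $n$), but it needs to be said. Second, the proposed shortcut for $k=2$ via Proposition~\ref{th:symmetries}(iv)--(v) is not sound as stated: symmetry of the \emph{image} curve $L_{n,N}^{(2)}$ tells you nothing about how the parameter $u$ is distributed along it, which is precisely the content of the lemma. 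Since the direct type-2 computation is no harder than the type-1 one, you should simply carry it out.
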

\begin{proof}
See the proof of Lemma 4.2 of \cite{FikMavr}. 
\end{proof}

\section{Numerical results; further curve properties}

We begin this section by stating our results in the form of an algorithm that can generate $L^{(k)}_{n,N}$. 

\subsection{Algorithm}

Given $n$, $N$, and a value $k=1$ or $k=2$, a point $\rho \in L^{(k)}_{n,N}$ can be determined as follows. Determine the required range $R(n,N)$ from (\ref{eq:range-of-u}); pick $u\in R(n,N)$; compute $g_{n,N}(u)$ from (\ref{eq:g-def}); solve the transcendental equation (\ref{eq:ba}) for $v=v(n,N,u)$; set $\mu(n,N,u)=u+iv(n,N,u)$; find $f_{n,N}^{(k)}(u)$ from (\ref{eq:rhosdef}) or (\ref{eq:rhocdef}); and set $\rho=f^{(k)}_{n,N}(u)$. Repeat the above process for many $u\in R(n,N)$ until the continuous curve $L^{(k)}_{n,N}$ is depicted.

\subsection{Initial numerical results; uniqueness}

\begin{figure}
\begin{center}
\includegraphics[scale=0.4]{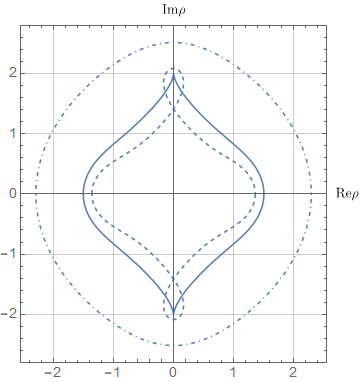}
\end{center}
\caption{Case $n=5$: Type-1 level curves $L^{(1)}_{5,N}$  for $N=5$ (solid line), $N=3$ (dashed line), and $N=30$ (dot-dashed line).} \label{fig:L5-1}
\end{figure}

For $n=5$ and $k=1$, Fig.~\ref{fig:L5-1} shows the $L^{(k)}_{n,N}$ generated by our algorithm for three values of $N$, namely $N=5$, $N=30$, and $N=3$ (note that $3>N_\textrm{min}(5)=1.25$). The $L^{(1)}_{5,5}$ curve is, of course,  the same as the type-1 borderline curve  $B_5^{(1)}$ of Fig. \ref{fig:Fig_b5k}. As anticipated (see  Introduction), singularities appear only in the $N=5$ curve; and the $N=30$ curve is quite close to the circle $|\rho|=30^{1/4}\cong 2.3$.  While the $N=3$ curve presents no singularities, it self-intersects twice, and loops around the two cusps of the $N=5$ curve. Clearly, a self-intersection point of $L^{(k)}_{n,N}$ gives rise to \textit{two} type-$k$ eigenvalues of magnitude $N$. In a sense, the cusp of $B_n^{(k)}=L_{n,n}^{(k)}$ has transformed into the loop of $L_{n,N}^{(k)}$. Conversely, it is illuminating to think of the cusp as a degenerate loop, where the aforementioned two eigenvalues coalesce and give the borderline/double eigenvalue.

Recall that the predictions of Fig.~\ref{fig:L5-1} can be checked via the elementary formulas (\ref{eq:n-equals-5-evalues}). When $\rho=i\sqrt{2}$, for example, (\ref{eq:n-equals-5-evalues}) gives $\lambda^{(1)}=\pm 3i$, meaning that the matrix $K_5(i\sqrt{2})$ has two type-1 eigenvalues of magnitude $3$. This fact is in agreement with Fig.~\ref{fig:L5-1}, in which $\rho=i\sqrt{2}$ is seen to be a self-intersection point of the $N=3$ curve.  As another example, the $N=5$ and $N=3$ curves are seen in the figure to intersect once in the first quadrant. A close focus tells us that the intersection point is  $\rho\cong 0.139 +i1.693$, meaning that the two $\lambda^{(1)}$ must have magnitudes $5$ and $3$ when $\rho$ assumes the aforementioned value. Eqn. (\ref{eq:n-equals-5-evalues}) verifies this is indeed the case (the respective phases are $1.87$ and $-2.12$ rads). 

As illustrated via Fig.~\ref{fig:L5-1}, a self-intersection point $\rho\in L_{n,N}^{(k)}$ gives rise to \textit{two} type-$k$ eigenvalues of magnitude $N$.
When there is a cusp at $\rho\in L_{n,N}^{(k)}$ (this can only occur when $N=n$), we have a \textit{double} type-$k$ eigenvalue (whose magnitude is $n$). Otherwise the situation is simpler: 

\begin{theorem}\label{th:uniqueness}
Let $N\in\mathbb{R}$ with $N>N_\mathrm{min}(n)$, let $k=1$ or $k=2$, and let  $\rho$ be a point of  $L_{n,N}^{(k)}$ that is neither a self-intersection point nor a cusp point of $L_{n,N}^{(k)}$. Then 
$K_{n}(\rho)$ possesses a unique type-$k$ eigenvalue of magnitude $N$, and the said eigenvalue is non-repeated.
\end{theorem}
\begin{proof}
Existence of a type-$k$ eigenvalue $\lambda$ with $|\lambda|=N$ follows from
Corollary~\ref{th:curves}. Suppose that $\lambda^{\prime}$ is also a type-$k$ eigenvalue with $|\lambda^{\prime}|=|\lambda|=N$. By Theorem~\ref{th:main} and Corollary \ref{th:curves}, there exist $u$ and $u^{\prime}$ in $R(n,N)$ such that
\begin{equation*}
\rho=f^{(k)}_{n,N} (u),\quad \lambda=b^{(k)}_{n,N}(u),\quad \rho=f^{(k)}_{n,N} (u^{\prime}), \quad \lambda^{\prime}=b^{(k)}_{n,N}(u^{\prime}).
\end{equation*}
The equality $f^{(k)}_{n,N} (u)=f^{(k)}_{n,N} (u^{\prime})$ implies $u=u^{\prime}$ (otherwise, $\rho$ would be a self-intersection point of $L_{n,N}^{(k)}$). Thus $\lambda=\lambda^{\prime}$ and we have demonstrated uniqueness.

We now show that $\lambda$ is non-repeated. Suppose first that $\rho\in\mathbb{C}\setminus\{-1,0,1\}$. If $\lambda$ were repeated, then $\lambda$ would be a double eigenvalue with $\lambda=-n$ (see Theorem 2.4 of \cite{Fik2020}). Therefore, $\lambda$ would be a borderline/double eigenvalue (see Definition 1.2 of \cite{FikMavr}). This would in turn imply that $\rho$ is a cusp point of $L_{n,N}^{(k)}=L_{n,n}^{(k)}=B_n^{(k)}$ (see our Introduction), contradicting the hypothesis of our theorem.

By $|\lambda|=N>N_\mathrm{min}(n)$ and (\ref{eq:n0-smaller-n}), we have $|\lambda|>1$. By (6.22) of \cite{Fik2018} and Lemma~2.5 of \cite{Fik2018}, the only eigenvalue of $K_n(\pm 1)$ that satisfies $|\lambda|>1$ is simple (in fact, its magnitude equals $n$). Thus $\lambda$ is non-repeated when $\rho=\pm 1$. This completes our proof, for the value $\rho=0$ is not permissible: If $\rho=0$, then $K_n(\rho)=K_n(0)$ would be the $n\times n$ identity matrix, contradicting  $|\lambda|>1$.
\end{proof}

For $n=12$ and $k=2$, Fig. \ref{fig:fig_highN} shows the $L^{(k)}_{n,N}$ for $N=12$, $20$, $30$, $70$, and $200$. An increase in $N$ is seen to result in a slightly larger curve. Observe how the cusp singularities of the smallest ($N=12$) curve\footnote{The number of cusps is $10$, corresponding to $10$ different borderline/double type-2 eigenvalues. The number $10$ can be found \textit{a priori} via Theorem 4.5 of \cite{Fik2018}. Note that no cusp lies on the real axis. This is always so because, for $\rho\in\mathbb{R}$ with $|\rho|>1$, $K_n(\rho)$ has no multiple eigenvalues, see Proposition 6.1 of \cite{Fik2018}.} gradually recede as $N$ grows, with the largest ($N=200$) curve closely approaching the circle $|\rho|=200^{1/11}\cong 1.62$, in accordance with (\ref{eq:large-N-circle}).

\begin{figure}
\begin{center}
\includegraphics[scale=0.4]{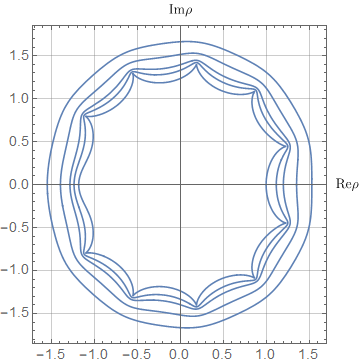}
\end{center}
\caption{Case $n=12$: Type-2 level curves $L^{(2)}_{12,N}$  for $N=12$, $N=20$, $N=30$, $N=70$, and $N=200$. An increase in $N$ enlarges the curve.} \label{fig:fig_highN}
\end{figure}

\subsection{On the existence of loops}

As long as $N_\mathrm{min}(n)<N<n$, loops similar to the ones in Fig.~\ref{fig:L5-1} were observed in all the $L_{n,N}^{(k)}$ generated by our algorithm. It is thus worth proving that such loops always occur. A completely general proof seems difficult, so we limit ourselves to the special case used in Section~6 of \cite{Fik2020} as a means of studying eigenvalue bifurcations. Our special case assumes that $n=3,5,7,\ldots$ and that
\begin{equation}
\label{eq:symmetric-case-assumption}
k=\begin{cases}1,\mathrm{\ if\ }n=5,9,13,\ldots\\
2,\mathrm{\ if\ }n=3,7,11,\ldots
\end{cases}
\end{equation}
Subject to the above assumptions, \cite{Fik2020} shows that $B_n^{(k)}=L_{n,n}^{(k)}$ has  a cusp on the positive-imaginary semi-axis, and one on the negative-imaginary semi-axis, as in the example in Fig. \ref{fig:L5-1}. Theorem \ref{th:loops} will show that the situation is different when $N<n$.

\begin{lemma}
\label{lemma:v-im-def}
Let $N\in\mathbb{R}$ with $1\le N<n$, where $n=3,5,7,\ldots$. Then the equation
\begin{equation}
\cosh(nv)=N\cosh v,\quad v\ge 0
\end{equation}
has a unique nonnegative solution, to be denoted by $v_\mathrm{im}(n,N)$.
\end{lemma}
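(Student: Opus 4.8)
The plan is to prove existence and uniqueness of the root of $\cosh(nv)=N\cosh v$ by the same elementary monotonicity argument used for Lemma~\ref{lemma:v0-def}. First I would rewrite the equation as $\varphi(v):=\dfrac{\cosh(nv)}{\cosh v}=N$ on $[0,+\infty)$, noting $\cosh v\ne 0$ so $\varphi$ is well-defined and continuous there. I would then record the two boundary values: $\varphi(0)=1$, and $\varphi(v)\to+\infty$ as $v\to+\infty$ (since $\cosh(nv)\sim\frac12 e^{nv}$ while $\cosh v\sim\frac12 e^{v}$ and $n\ge 3$). Because $1\le N<n$, however, I must be slightly careful at the left endpoint: if $N=1$ then $v=0$ itself is a solution, and I need strict monotonicity to rule out any further one; if $N>1$ then the intermediate value theorem on $(0,+\infty)$ supplies a solution, again made unique by monotonicity. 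So the crux is to show $\varphi$ is strictly increasing on $[0,+\infty)$.

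For the monotonicity I would compute
\begin{equation*}
\varphi'(v)=\frac{n\sinh(nv)\cosh v-\cosh(nv)\sinh v}{\cosh^2 v},
\end{equation*}
so it suffices to show the numerator $\psi(v):=n\sinh(nv)\cosh v-\cosh(nv)\sinh v$ is positive for $v>0$. I would argue $\psi(0)=0$ and then show $\psi$ is increasing, or more directly bound $\psi(v)\ge \sinh(nv)\cosh v-\cosh(nv)\sinh v=\sinh\bigl((n-1)v\bigr)>0$ for $v>0$ using $n\ge 2$ and the hyperbolic subtraction formula; the extra $(n-1)\sinh(nv)\cosh v$ only helps. That gives $\varphi'(v)>0$ on $(0,+\infty)$, hence $\varphi$ is strictly increasing on $[0,+\infty)$, which combined with the boundary values $\varphi(0)=1\le N$ and $\varphi(+\infty)=+\infty$ yields exactly one solution $v_\mathrm{im}(n,N)\ge 0$ (equal to $0$ precisely when $N=1$).

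An alternative, perhaps cleaner route is to substitute $x=\cosh v\ge 1$ and use the Chebyshev machinery from Section~2: since $\cosh(nv)=T_n(\cosh v)=T_n(x)$, the equation becomes $T_n(x)=Nx$, i.e. $T_n(x)/x=N$. By (\ref{eq:t-increases}), $T_n(x)$ is strictly increasing on $[1,\infty)$ with $T_n(1)=1$, and one checks $T_n(x)/x$ is also strictly increasing there (for instance because $(T_n(x)/x)'=(xT_n'(x)-T_n(x))/x^2$ and $xT_n'(x)-T_n(x)$ vanishes at $x=1$ and has derivative $xT_n''(x)>0$ for $x>1$, the convexity of $T_n$ on $[1,\infty)$ being standard). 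Then $T_n(x)/x$ ranges over $[1,+\infty)$ bijectively, and $1\le N<n$ guarantees a unique $x\ge 1$, hence a unique $v\ge 0$ via $v=\operatorname{arccosh}x$. Either way, the only mild obstacle is the endpoint bookkeeping at $N=1$ (where the solution degenerates to $v=0$) — the hypothesis $N<n$ plays no role in existence or uniqueness here and is presumably retained only because it is the regime of interest for the later construction; I would state the lemma's proof without leaning on it. I expect the monotonicity estimate $\psi(v)\ge\sinh((n-1)v)>0$ to be the one substantive line, and everything else to be routine.
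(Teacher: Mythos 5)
Your proposal is correct and rests on the same elementary idea — monotonicity plus the intermediate value theorem — but the paper packages it differently. The paper's proof works directly with the difference $\phi(v):=\cosh(nv)-N\cosh v$, noting $\phi(0)=1-N$, $\phi(v)\to+\infty$, and $\phi'(v)=n\sinh(nv)-N\sinh v>0$ for $v>0$ by the hyperbolic part of Corollary~\ref{corollary:trig-forms} (since $\sinh(nv)\ge n\sinh v$ gives $n\sinh(nv)-N\sinh v\ge(n^2-N)\sinh v>0$). You instead work with the ratio $\varphi(v)=\cosh(nv)/\cosh v$ and establish its strict monotonicity via the hyperbolic subtraction identity $\sinh(nv)\cosh v-\cosh(nv)\sinh v=\sinh((n-1)v)>0$, without appealing to Corollary~\ref{corollary:trig-forms}. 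Your version buys two small things: it makes visibly irrelevant the upper bound $N<n$ (the ratio increases from $1$ to $+\infty$ regardless), and it handles the endpoint $N=1$ with more care — the paper writes ``$1-N<0$,'' a harmless slip when $N=1$, where the lemma still holds with $v_\mathrm{im}(n,1)=0$, exactly as you observe. Your alternative Chebyshev substitution $T_n(x)=Nx$ with $x=\cosh v$ is also valid but is not the route the paper takes.
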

\begin{proof}
The function $\cosh(nv)-N\cosh v$ equals $1-N<0$ when $v=0$, equals $+\infty$ when $v=+\infty$, and has the derivative $n\sinh(nv)-N\sinh v$, which is positive by Corollary \ref{corollary:trig-forms}.
\end{proof}

The lemma that follows gives four points for which $L_{n,N}^{(k)}$ intersects the imaginary axis. Two of the four are self-intersection points.

\begin{lemma}
\label{lemma:loop}
Let $N_\mathrm{min}(n)<N<n$ with $n=3,5,7,\ldots$, let $k$ be given by (\ref{eq:symmetric-case-assumption}),  and let $u'$ be any one of the six values 
\begin{equation}
\label{eq:six-u-values}
\pm \frac{\pi}{2},\ \pm \frac{\pi}{2}\pm u_0(n,N)
\end{equation}
where $u_0(n,N)$ is given in Lemma \ref{lemma:x0-u0-def}. Then for all six $u'$, $f_{n,N}^{(k)}(u')$ is purely imaginary. Furthermore,
\begin{equation}
\label{eq:temp4}
\rho_\mathrm{upper}\equiv f_{n,N}^{(k)}\left(-\frac{\pi}{2}-u_0(n,N)
\right)=f_{n,N}^{(k)}\left(-\frac{\pi}{2}+u_0(n,N)
\right)
\end{equation}
and
\begin{equation}
\label{eq:temp5}
\rho_\mathrm{lower}\equiv f_{n,N}^{(k)}\left(\frac{\pi}{2}-u_0(n,N)
\right)=f_{n,N}^{(k)}\left(\frac{\pi}{2}+u_0(n,N)
\right)
\end{equation}
\end{lemma}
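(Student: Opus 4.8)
\textbf{Proof proposal for Lemma \ref{lemma:loop}.}

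The plan is to evaluate $\mu(n,N,u')$ at the six listed values of $u'$ and to track how the defining formulas \eqref{eq:rhosdef}/\eqref{eq:rhocdef} behave under the reflections $u'\mapsto -u'$ and $u'\mapsto \pi - u'$ (both of which fix the set of six values). First I would record the value of $v(n,N,u')$ at each point. For $u'=\pm\pi/2$ we have $\sin^2 u'=1$ and $\sin^2(nu')=\sin^2(n\pi/2)=1$ since $n$ is odd, so $g_{n,N}(\pm\pi/2)=N^2-1>0$ (using $N\ge N_\mathrm{min}(n)\ge 1$); hence $v(n,N,\pm\pi/2)$ is the unique positive root of \eqref{eq:ba} with that right-hand side, call it $v_*$. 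For $u'=\pm\pi/2\pm u_0(n,N)$, the argument $nu'$ equals $\pm n\pi/2 \pm n u_0$; since $n$ is odd, $\sin^2(nu') = \cos^2(n u_0)=|U_{n-1}(\cos u_0)|^2\sin^2 u_0 = N^2\sin^2 u_0$ by \eqref{eq:u-def} and the definition \eqref{eq:x0-equation}--\eqref{eq:u0-def} of $u_0$; meanwhile $\sin^2 u' = \cos^2 u_0 + \text{(cross terms)}$ — more carefully, $\sin^2(\pi/2\pm u_0)=\cos^2 u_0$, so $g_{n,N}$ at these four points equals $N^2\cos^2 u_0 - N^2\sin^2 u_0$... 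I should redo this: actually $g_{n,N}(\pi/2\pm u_0) = N^2\cos^2 u_0 - \sin^2(n\pi/2\pm nu_0) = N^2\cos^2 u_0 - \cos^2(nu_0)$, and $\cos^2(nu_0) = |T_{n-1+1}(\cos u_0)|$... the cleanest route is to use \eqref{eq:temp1}: $g_{n,N}(\arccos x) = (1-x^2)(N+|U_{n-1}(x)|)(N-|U_{n-1}(x)|)$ with $x=\cos u_0 = x_0(n,N)$, which gives $g_{n,N}$ at all four points $\pi/2\pm u_0$, $-\pi/2\pm u_0$ equal to the \emph{same} value, and by \eqref{eq:U-n-N-inequality} with $|x|=x_0$ this value is $0$. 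Hence $v(n,N,u')=0$ at all four of the off-center points, and $\mu = u'$ is real there.

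Next I would verify the "purely imaginary" claim. Since $n$ is odd, write $(n\pm 1)/2$; at $u'=\pm\pi/2$ one has $\mu = \pm\pi/2 + i v_*$, and a direct substitution into \eqref{eq:rhosdef} or \eqref{eq:rhocdef} — using $\sin(\tfrac{(n\pm1)\mu}{2})$ with $(n\pm1)/2$ an integer and the half-odd-integer multiple of $\pi$ inside — turns each sine/cosine into a pure exponential times $i$ or $1$, so the quotient $f_{n,N}^{(k)}(\pm\pi/2)$ comes out purely imaginary; here the parity choice \eqref{eq:symmetric-case-assumption} is exactly what makes the type-$k$ formula (rather than the other type) yield an imaginary value. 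For the four real-$\mu$ points $u'=\pm\pi/2\pm u_0$, $f_{n,N}^{(k)}(u')$ is $f_{n,N}^{(k)}$ evaluated at a real argument of the form $\pi/2 + (\text{something})$; again because $(n\pm1)/2\in\mathbb{Z}$ and $n$ is odd, the shift by $\pi/2$ inside $\sin\tfrac{(n+1)\mu}{2}$ and $\sin\tfrac{(n-1)\mu}{2}$ converts the ratio into a ratio of cosines over cosines (or sines over sines) of $\tfrac{(n\pm1)u_0}{2}$, up to factors of $\pm i$ that survive in the quotient; I would push this symbolic reduction through to land on a purely imaginary number. Then \eqref{eq:temp4} and \eqref{eq:temp5} follow: the reflection $u\mapsto -u$ combined with the identity $f_{n,N}^{(k)}(-u) = \overline{f_{n,N}^{(k)}(u)}$ (which is property (ii) of Proposition \ref{th:symmetries}, i.e. symmetry about the real axis) shows $f_{n,N}^{(k)}(-\pi/2 - u_0)$ and $f_{n,N}^{(k)}(-\pi/2+u_0)$ are complex conjugates of $f_{n,N}^{(k)}(\pi/2+u_0)$ and $f_{n,N}^{(k)}(\pi/2-u_0)$ respectively; combined with the just-proved fact that all four are purely imaginary, conjugation is trivial, and then a single further symmetry — the reflection $u\mapsto \pi-u$, under which $g_{n,N}$, hence $v$, hence $|f_{n,N}^{(k)}|$ is invariant, together with oddness of $n$ controlling the sign — identifies $f_{n,N}^{(k)}(-\pi/2-u_0)$ with $f_{n,N}^{(k)}(-\pi/2+u_0)$ and similarly for the lower pair.

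The step I expect to be the genuine obstacle is the bookkeeping that shows the two values in \eqref{eq:temp4} (and in \eqref{eq:temp5}) actually coincide rather than merely having equal modulus: having established that both are purely imaginary with equal absolute value, I still need to rule out the sign flip. This is where I would have to use the specific parity pattern \eqref{eq:symmetric-case-assumption} and the precise form of $f_{n,N}^{(k)}$, carefully expanding $\sin\tfrac{(n+1)(\pm\pi/2+u_0)}{2}$ and $\sin\tfrac{(n-1)(\pm\pi/2+u_0)}{2}$ (noting that $(n+1)/2$ and $(n-1)/2$ have opposite parities, which is the crux), so that the $\pm$ in front of $u_0$ and the sign picked up from the $\pi/2$-shift cancel in the ratio. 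I would organize this by treating $n\equiv 1\pmod 4$ and $n\equiv 3\pmod 4$ separately if needed, but I anticipate a uniform argument is available once the exponential form of each sine is written out. The remaining claims (real $\mu$ at four points, imaginary output at all six) are then routine substitutions.
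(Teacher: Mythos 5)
There is a genuine computational error at the heart of your argument. You invoke \eqref{eq:temp1}, which reads
\begin{equation*}
g_{n,N}(\arccos x)=(1-x^2)\bigl(N+|U_{n-1}(x)|\bigr)\bigl(N-|U_{n-1}(x)|\bigr),
\end{equation*}
with $x=\cos u_0=x_0(n,N)$, and conclude that $g_{n,N}$ vanishes at the four off-center points $\pm\tfrac{\pi}{2}\pm u_0$. But with $x=\cos u_0$ the formula gives $g_{n,N}(\arccos(\cos u_0))=g_{n,N}(u_0)=0$, \emph{not} $g_{n,N}(\tfrac{\pi}{2}\pm u_0)$. The correct calculation (which you had started, then abandoned) is
\begin{equation*}
g_{n,N}\!\left(\tfrac{\pi}{2}\pm u_0\right)=N^2\cos^2 u_0-\cos^2(nu_0)
=N^2\cos^2 u_0-\bigl(1-N^2\sin^2 u_0\bigr)=N^2-1,
\end{equation*}
using $\sin^2(nu_0)=N^2\sin^2 u_0$, which follows from $|U_{n-1}(\cos u_0)|=N$ and \eqref{eq:u-def}. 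Thus $g_{n,N}(u')=N^2-1>0$ at \emph{all} six points, so the transcendental equation \eqref{eq:ba} has the same strictly positive root $v(n,N,u')$ at all six, namely the quantity $v_{\mathrm{im}}(n,N)$ that Lemma~\ref{lemma:v-im-def} defines via $\cosh(nv)=N\cosh v$ (one checks $h_{n,N}(v_{\mathrm{im}})=\sinh^2(nv_{\mathrm{im}})-N^2\sinh^2 v_{\mathrm{im}}=N^2-1$). Your $v=0$ at the off-center points is wrong, and in fact it should have raised a flag: if $\mu=u'$ were real there, then with $n$ odd both $(n+1)/2$ and $(n-1)/2$ are integers, so $f_{n,N}^{(k)}(u')$ would be a ratio of real sines (or cosines) of real arguments, hence real rather than purely imaginary, contradicting the very claim you are trying to prove.

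Once corrected, the structure of your argument does align with the paper's: show that $v$ takes the same value $v_{\mathrm{im}}(n,N)$ at all six $u'$, so that $\mu(n,N,u')=u'+iv_{\mathrm{im}}(n,N)$, and then substitute into \eqref{eq:rhosdef}/\eqref{eq:rhocdef} and simplify, using oddness of $n$ and the parity choice in \eqref{eq:symmetric-case-assumption} to see that the ratios collapse to purely imaginary numbers and that the two members of each pair in \eqref{eq:temp4}/\eqref{eq:temp5} coincide. Your observation that the $\pi/2$-shift and the opposite parities of $(n+1)/2$ and $(n-1)/2$ are the crux of the sign bookkeeping is on target. But as written the proof does not go through, because the four off-center evaluations are built on the wrong imaginary part.
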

\begin{proof} 
With the function $v(n,N,u)$ defined in Theorem \ref{th:main}, we first show that
\begin{equation}
\label{eq:temp3}
v(n,N,u')=v_\mathrm{im}(n,N)
\end{equation}
for all six values $u'$, where
$v_\mathrm{im}(n,N)$ is defined in Lemma~\ref{lemma:v-im-def}.
By Theorem \ref{th:main} and eqn. (\ref{eq:h-def}), it suffices to show that
\begin{equation}
\label{eq:temp7}
h_{n,N}\left(v_\mathrm{in}(n,N)\right)=g_{n,N}(u')
\end{equation}
By Lemma~\ref{lemma:v-im-def} and (\ref{eq:h-def}), the left-hand side equals $N^2-1$. By (\ref{eq:six-u-values}), (\ref{eq:g-def}), and (\ref{eq:u-def}), the right-hand side also equals $N^2-1$, thus proving (\ref{eq:temp7}) and (\ref{eq:temp3}). With (\ref{eq:temp3}), (\ref{eq:mupar}) gives
\begin{equation}
\label{eq:temp8}
\mu(n,N,u')=u^{\prime}+iv_\mathrm{im}(n,N)
\end{equation}
for all six values $u^{\prime}$. We prove what remains upon substituting the six $u^{\prime}$ into the expressions for $f_{n,N}^{(k)}(u)$ in (\ref{eq:rhosdef}) and (\ref{eq:rhocdef}), and simplifying the resulting expressions using (\ref{eq:temp8}) and Lemma \ref{lemma:v-im-def}.
\end{proof}

\begin{theorem}
\label{th:loops}
Let $N_\mathrm{min}(n)<N<n$ with $n=3,5,7,\ldots$ and let $k$ be given by (\ref{eq:symmetric-case-assumption}). Then
$L_{n,N}^{(k)}$ loops in both the upper- and the lower-half planes. In the upper- (lower-) half plane, the  self-intersection point is given by the $\rho_\mathrm{upper}$ ($\rho_\mathrm{lower}$) in (\ref{eq:temp4}) or (\ref{eq:temp5}). Both self-intersection points lie on the imaginary axis. In the limit $N\to n-0$, furthermore, each loop shrinks to a single point, the point being the corresponding cusp of $B_n^{(k)}=L_{n,n}^{(k)}$ on the imaginary axis.
\end{theorem}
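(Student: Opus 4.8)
The plan is to read off the self-intersection points directly from Lemma~\ref{lemma:loop}, verify that the parameter values involved really do lie in the admissible range $R(n,N)$, deduce the existence of the two loops, and then let $N\to n-0$.

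First I would record that the six values $\pm\frac{\pi}{2},\ \pm\frac{\pi}{2}\pm u_0(n,N)$ in (\ref{eq:six-u-values}) all belong to $R(n,N)$ as described in (\ref{eq:range-of-u}); the only thing to check is $u_0(n,N)<\frac{\pi}{4}$, which holds for $n\ge 5$ because $u_0(n,N)<\frac{\pi}{n}\le\frac{\pi}{5}<\frac{\pi}{4}$ by (\ref{eq:u0-inequality}), and holds for $n=3$ since there $u_0(3,N)=\arccos\!\big(\tfrac12\sqrt{N+1}\big)<\frac{\pi}{4}$ for $N>N_\mathrm{min}(3)=1$. Consequently each $f_{n,N}^{(k)}(u')$ is a genuine point of $L_{n,N}^{(k)}$. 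By Lemma~\ref{lemma:loop}, $\rho_\mathrm{upper}=f_{n,N}^{(k)}\!\big(-\frac{\pi}{2}-u_0(n,N)\big)=f_{n,N}^{(k)}\!\big(-\frac{\pi}{2}+u_0(n,N)\big)$ is purely imaginary; since the two parameters are distinct (as $u_0(n,N)>0$) and strictly negative, Lemma~\ref{lemma:split-plane} puts $\rho_\mathrm{upper}$ on the open positive imaginary semi-axis and makes it a self-intersection point of $L_{n,N}^{(k)}$. The restriction of $f_{n,N}^{(k)}$ to $\big[-\frac{\pi}{2}-u_0(n,N),-\frac{\pi}{2}+u_0(n,N)\big]\subset R(n,N)$ is therefore a closed subarc of $L_{n,N}^{(k)}$ based at $\rho_\mathrm{upper}$ and, by Lemma~\ref{lemma:split-plane} (the parameters being interior to $R(n,N)$), contained in the open upper half-plane --- this is the asserted loop. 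The claims about $\rho_\mathrm{lower}$ and the lower loop follow identically from $\frac{\pi}{2},\ \frac{\pi}{2}\pm u_0(n,N)$, or from the real-axis symmetry in Proposition~\ref{th:symmetries}(ii).

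Next I would show the loop is nondegenerate. On that subinterval one has $|u|\in\big[\frac{\pi}{2}-u_0(n,N),\frac{\pi}{2}+u_0(n,N)\big]\subset\big(u_0(n,N),\pi-u_0(n,N)\big)$, so $g_{n,N}(u)>0$ there by Proposition~\ref{proposition:g-for-Nsmall}; hence by Proposition~\ref{proposition:h-for-Nsmall} the root $v(n,N,u)$ of (\ref{eq:ba}) is positive and real-analytic in $u$, and so is $f_{n,N}^{(k)}$. Were $f_{n,N}^{(k)}$ constant on this subinterval it would, by analytic continuation, be constant on the whole arc of $L_{n,N}^{(k)}$ carried by $\big[u_0(n,N),\pi-u_0(n,N)\big]$ and its mirror image, which is absurd; so the loop is a nondegenerate closed curve. (One can moreover check from (\ref{eq:mupar}) and the fact that $-\frac{\pi}{2}$ and $-\frac{\pi}{2}\pm u_0(n,N)$ share the same $v$-value, as in the proof of Lemma~\ref{lemma:loop}, that the ``midpoint'' $f_{n,N}^{(k)}(-\frac{\pi}{2})\ne\rho_\mathrm{upper}$, so the loop is in fact simple.) For the limiting statement, (\ref{eq:u0-limiting-value}) gives $u_0(n,N)\to 0$ as $N\to n-0$, so the parameter interval collapses to $\{-\frac{\pi}{2}\}$; combined with joint continuity of $(N,u)\mapsto f_{n,N}^{(k)}(u)$ near $(n,-\frac{\pi}{2})$ --- which reduces to continuity of the root $v(n,N,u)$ of (\ref{eq:ba}), valid because for $N\le n$ the left side of (\ref{eq:ba}) is a strictly increasing function of $v$ on $[0,+\infty)$ with continuous inverse, while $g_{n,N}(-\frac{\pi}{2})=N^2-1>0$ varies continuously --- this forces the whole loop into a shrinking neighbourhood of $f_{n,n}^{(k)}(-\frac{\pi}{2})$, so it collapses to that point; likewise the lower loop collapses to $f_{n,n}^{(k)}(\frac{\pi}{2})$.

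The step I expect to be the genuine obstacle is identifying $f_{n,n}^{(k)}(\mp\frac{\pi}{2})$ as the cusp of $B_n^{(k)}=L_{n,n}^{(k)}$ on the corresponding imaginary semi-axis, rather than as just some point of $B_n^{(k)}$ there. By Lemma~\ref{lemma:loop} (whose computation extends continuously to $N=n$, where $v(n,n,\mp\frac{\pi}{2})$ is the positive root of $\cosh(nv)=n\cosh v$) these are points of $B_n^{(k)}$ on the imaginary axis in the upper and lower half-planes; and using (\ref{eq:mupar}) and the oddness of $n$ one computes that the attached eigenvalue $b_{n,n}^{(k)}(\mp\frac{\pi}{2})$ equals $-n$ precisely for the $(n,k)$-pairing (\ref{eq:symmetric-case-assumption}). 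Since \cite{Fik2020} already asserts, under (\ref{eq:symmetric-case-assumption}), that $B_n^{(k)}$ carries one cusp on each imaginary semi-axis, the cleanest completion is to invoke \cite{Fik2020} for the location of that cusp; alternatively one verifies directly that the parametrization is singular there, $\big(f_{n,n}^{(k)}\big)'(\mp\frac{\pi}{2})=0$, which together with $b_{n,n}^{(k)}(\mp\frac{\pi}{2})=-n$ exhibits a type-$k$ double eigenvalue $-n$ of $K_n(f_{n,n}^{(k)}(\mp\frac{\pi}{2}))$ and hence a cusp, by the criterion recalled in the Introduction. Everything else is bookkeeping on top of Lemma~\ref{lemma:loop}.
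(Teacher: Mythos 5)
Your proof follows the same route as the paper's --- parametrize the loop by $u\in\bigl[-\tfrac{\pi}{2}-u_0(n,N),\,-\tfrac{\pi}{2}+u_0(n,N)\bigr]$, read off the self-intersection from Lemma~\ref{lemma:loop}, place it in the upper half-plane via Lemma~\ref{lemma:split-plane}, and send $u_0(n,N)\to 0$ --- but you supply details that the paper leaves implicit, two of which are genuine gaps in the paper's own argument. First, the paper justifies $\bigl[-\tfrac{\pi}{2}-u_0,-\tfrac{\pi}{2}+u_0\bigr]\subset R(n,N)$ by citing (\ref{eq:u0-inequality}); this requires $u_0<\pi/4$, which (\ref{eq:u0-inequality}) alone gives only for $n\ge 5$, and your direct check $u_0(3,N)=\arccos\bigl(\tfrac12\sqrt{N+1}\bigr)<\pi/4$ for $N>N_\mathrm{min}(3)=1$ correctly closes the $n=3$ case. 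Second, and more substantively, you rightly observe that the paper's limiting argument only shows the loop shrinks to $f_{n,n}^{(k)}(\mp\pi/2)$, an imaginary-axis point of $B_n^{(k)}$, without verifying that this point is the cusp; your computation that $b_{n,n}^{(k)}(\mp\pi/2)=-n$ precisely under the pairing (\ref{eq:symmetric-case-assumption}) (use $\cosh(nv)=n\cosh v$, the oddness of $n$, and the sign of $\sin(n\pi/2)$), combined either with the borderline/double-eigenvalue cusp criterion or a citation of \cite{Fik2020}, is exactly the missing piece. Your nondegeneracy check is also a welcome precaution the paper omits; note though that the analytic-continuation argument need not reach the mirror interval, since constancy on $(-\pi+u_0,-u_0)$ alone already forces the two real-axis intersection points guaranteed by Proposition~\ref{th:symmetries}(i) to coincide. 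The only unverified item is your parenthetical claim that the loop is a simple closed curve; the theorem does not assert this, so it is harmless.
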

\begin{proof} The point set
\begin{equation}\label{eq:loop-upper}
\left\{\rho \in \mathbb{C}: \rho=f^{(k)}_{n,N}(u)\mathrm{\ for\ some\ } u\in \left[-\frac{\pi}{2}-u_0(n,N),-\frac{\pi}{2}+u_0(n,N)\right]\right\},
\end{equation}
is a proper subset of $L_{n,N}^{(k)}$ by Corollary \ref{th:curves} and (\ref{eq:u0-inequality}), and belongs to the upper-half plane by Lemma~\ref{lemma:split-plane}. The endpoints of the interval in (\ref{eq:loop-upper}) are different by (\ref{eq:u0-inequality}). By (\ref{eq:temp4}), however, the two endpoints correspond to the same point $\rho_\mathrm{upper}$ (on the positive imaginary semi-axis). Therefore (\ref{eq:loop-upper}) is the parametric representation of a curve that is closed, and the point $\rho$ is a self-intersection point of $L_{n,N}^{(k)}$. In other words, our point set forms a loop of  $L_{n,N}^{(k)}$. By (\ref{eq:u0-limiting-value}), the loop shrinks to a point (on the imaginary axis) in the limit $N\to n-0$, completing our proof for the upper-half-plane loop. For a proof corresponding to the lower-half plane, replace the two instances of $-\frac{\pi}{2}$ in (\ref{eq:loop-upper}) by $\frac{\pi}{2}$ .
\end{proof}
\begin{figure}
\begin{center}
\includegraphics[scale=0.4]{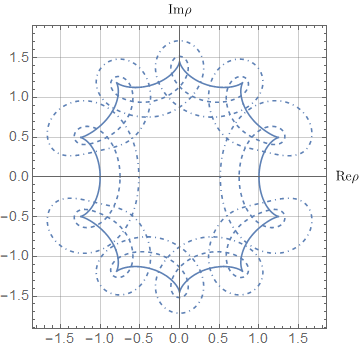}
\end{center}
\caption{Type-2 level curves $L^{(2)}_{11,N}$  for $N=11$ (solid line), $N=5$ (dashed line), and $N=2.8$ (dot-dashed line).} \label{fig_L11-2}
\end{figure}

For $n=11$, Fig. \ref{fig_L11-2} gives the three type-2 curves with $N=11$, $N=5$, and $N=2.8$. The borderline curve $B_{11}^{(2)}=L_{11,11}^{(2)}$ exhibits a number of cusps, two of which lie on the imaginary axis. The two other curves loop around each of those cusps. As expected, the $N=5$ curve exhibits smaller loops than does the $N=2.8$ curve; and the former curve is closer to the borderline ($N=n=11$) curve.

As we always observed loops when $N_\mathrm{min}(n)<N<n$, proved the existence of loops in a special case (Theorem \ref{th:loops}), and never observed loops when $N\ge n$, we are led to the conjecture that follows, which is a generalization (to cases $N\ne n$) of Conjecture 4.5 of \cite{FikMavr}.

\begin{conjecture}\label{th:jordan} The closed curves
$L_{n,N}^{(1)}$ and $L_{n,N}^{(2)}$ are Jordan curves whenever $N\ge n$, but non-Jordan whenever $N_\mathrm{min}(n)<N<n$.
\end{conjecture}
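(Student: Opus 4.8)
We do not settle Conjecture \ref{th:jordan}; we outline the route we would pursue and isolate the obstruction. The statement has three parts: that $L_{n,N}^{(k)}$ is Jordan for $N>n$, that it is Jordan for $N=n$, and that it is non-Jordan for $N_\mathrm{min}(n)<N<n$. The middle part is exactly Conjecture 4.5 of \cite{FikMavr}, which is open, so a realistic programme takes that case as a hypothesis and aims at the two new ranges $N>n$ and $N_\mathrm{min}(n)<N<n$. The common starting point is that, since nontrivial multiple eigenvalues of $K_n(\rho)$ occur only at magnitude $n$, the curve $L_{n,N}^{(k)}$ carries no cusp when $N\ne n$ — as anticipated in the Introduction; one should confirm this directly, the only delicate point being the endpoints of $R(n,N)$ in Case~2, where $v=0$ and the parametrization $f_{n,N}^{(k)}$ has an inverse-square-root velocity that nevertheless yields a transversal crossing of the real axis. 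Being cusp-free and closed, $L_{n,N}^{(k)}$ is then a smooth immersed closed curve for every $N\ne n$; hence its self-intersections are finite in number, and this number is locally constant in $N$ on each of the connected intervals $(N_\mathrm{min}(n),n)$ and $(n,+\infty)$, changing only at the (exceptional) values of $N$ at which the curve develops a self-tangency or a triple point.

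For the range $N>n$ the plan is to make quantitative the asymptotics of Section 6.3 of \cite{FikMavr} — the extraordinary type-$k$ eigenvalue has magnitude $\sim|\rho|^{n-1}$ — together with eigenvalue continuity, so that $L_{n,N}^{(k)}$ is, for all sufficiently large $N$, a $C^1$-small radial graph over the circle (\ref{eq:large-N-circle}) and hence a Jordan curve; then to let $N$ decrease toward $n$ and use the local constancy above to conclude that $L_{n,N}^{(k)}$ stays Jordan throughout $(n,+\infty)$, \emph{provided no self-tangency is encountered}.

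For the range $N_\mathrm{min}(n)<N<n$ the plan is a local unfolding of the cusps of $B_n^{(k)}=L_{n,n}^{(k)}$. Fix one such cusp $\rho_c=f_{n,n}^{(k)}(u_c)$, coming from a value $\mu_c$ at which $d\rho/d\mu$ vanishes — the signature of the type-$k$ double eigenvalue $\lambda=-n$ of $K_n(\rho_c)$. Writing $\rho$ as an analytic function of $\mu$ near $\mu_c$ and noting that at level $N=n-\varepsilon$ the constraint (\ref{eq:uv}) perturbs the solution locus of (\ref{eq:ba}) in a direction fixed by $\mathrm{sign}(N-n)$, one expands $f_{n,N}^{(k)}$ to the first nonvanishing order to show that the two arcs of $L_{n,N}^{(k)}$ meeting near $\rho_c$ reconnect into a small loop rather than a smooth arc — concretely, one produces $u_1\ne u_2$ near $u_c$ with $f_{n,N}^{(k)}(u_1)=f_{n,N}^{(k)}(u_2)$, the sign bookkeeping coming out so that the loop appears for $N<n$ and not for $N>n$, which recovers Theorem \ref{th:loops} for the symmetric choice (\ref{eq:symmetric-case-assumption}) as a consistency check. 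This yields non-Jordan-ness for $N$ just below $n$; the local constancy from the first paragraph then propagates a self-intersection all the way down to $N_\mathrm{min}(n)$, again provided no self-tangency intervenes.

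The single hard point, shared by both ranges, is therefore to exclude self-tangencies of $L_{n,N}^{(k)}$ as $N$ varies; without it one obtains non-Jordan-ness only for $N$ in a left-neighbourhood of $n$ and the Jordan property only for $N$ near $+\infty$. The natural route is to bound, for each fixed $\rho$, the number of $u\in R(n,N)$ with $f_{n,N}^{(k)}(u)=\rho$. For fixed $\rho$ the relation (\ref{eq:eigen1eq}) becomes a polynomial equation of degree $n+1$ in $e^{i\mu}$, and the imaginary part of $\mu$ is then pinned down by (\ref{eq:uv}) together with $v\ge 0$, so this count is finite; the task is to show it equals $1$ when $N\ge n$ and is governed by the number of cusps of $B_n^{(k)}$ when $N<n$. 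This is precisely the content of the winding-number heuristic for our curves — that the winding number of $L_{n,N}^{(k)}$ about $\rho$ records how many type-$k$ eigenvalues of $K_n(\rho)$ have magnitude below $N$ — and converting that heuristic into the required injectivity (respectively, controlled-multiplicity) statement is where we expect the real difficulty to lie, which is presumably why the statement is left as a conjecture.
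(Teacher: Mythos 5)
The paper does not prove this statement; it is offered as a conjecture supported only by the numerical observations discussed around Figs.~\ref{fig:L5-1} and~\ref{fig_L11-2}, by Theorem~\ref{th:loops} (which establishes loops, hence non-Jordan-ness, in the special symmetric case $n$ odd with $k$ given by \eqref{eq:symmetric-case-assumption}), and by the large-$N$ asymptotics toward the circle \eqref{eq:large-N-circle}. You correctly recognize this and do not overclaim a proof, which is the right call. Your outline tracks the paper's own evidence faithfully: the large-$N$ range is to be settled by quantifying the approach to the circle \eqref{eq:large-N-circle}, and the range $N_\mathrm{min}(n)<N<n$ by a local unfolding of the cusps of $B_n^{(k)}$, which you observe would recover Theorem~\ref{th:loops} as a special case. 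You add a genuinely useful structural observation the paper leaves implicit: a continuation argument in $N$ can propagate the Jordan or non-Jordan property across each of the intervals $(n,\infty)$ and $(N_\mathrm{min}(n),n)$ only if one excludes self-tangencies, and identifying that as the obstruction is a sharper diagnosis than the paper offers of why the statement is left as a conjecture. Your suggested route through the winding-number relation \eqref{eq:winding-1} to control the multiplicity of $f_{n,N}^{(k)}$ is also consistent with Section~\ref{sec: number-j}. Two small cautions: the $N=n$ endpoint is Conjecture~4.5 of \cite{FikMavr} and remains open, so your decision to treat it as a hypothesis rather than a base case is appropriate; and the claim that $L_{n,N}^{(k)}$ is a smooth immersed closed curve for $N\ne n$ would need the endpoint behavior at $|u|=u_0(n,N)$ and $|u|=\pi-u_0(n,N)$ in Case~2 checked more carefully than by appeal to cusp-freeness alone, since the parametrization degenerates there even though the curve itself may be regular.
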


\section{Eigenvalues with magnitudes larger than $N$; winding numbers}
\label{sec: number-j}

For $N>N_\mathrm{min}(n)$, and for a given $\rho\in\mathbb{C}$, this section illustrates how a given curve $L_{n,N}^{(k)}$  can help one find the non-negative integer $j_{n,N}^{(k)}(\rho)$ defined as follows.

\begin{definition} Let $\rho\in\mathbb{C}$ and $N>0$, and let $k=1$ or $k=2$. By $j_{n,N}^{(k)}(\rho)$ we denote the number (counting multiplicities) of type-$k$ eigenvalues of $K_n(\rho)$ whose magnitudes are larger than $N$.   
\end{definition}

Section~6 of \cite{FikMavr} treats the special case $N=n$ and Conjecture~\ref{th:jordan} allows us to extend that treatment to cases $N>n$. When $N_\mathrm{min}(n)<N<n$, however, the self-intersections of $L_{n,N}^{(k)}$ render the determination of $j_{n,N}^{(k)}(\rho)$ more involved. Although we use some of the principles discussed in Section~6 of \cite{FikMavr}, what follows additionally involves the winding numbers \cite{Roe} associated with our curve. Accordingly, we provide $L_{n,N}^{(k)}$ with an orientation; this allows us to find winding numbers, and to distinguish between points that are just to the left of the curve and points that are just to the right. 
The orientation we choose is such that the curve crosses the positive real axis from the lower- to the upper-half plane; this initial orientation gives the one at any other curve point in a natural manner. As we will see, the opposite orientation will do just as well.

\begin{figure}
\begin{center}
\includegraphics[scale=0.4]{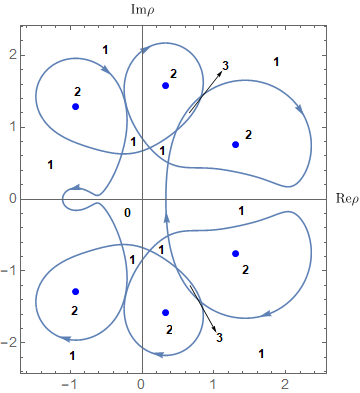}
\end{center}
\caption{Type-2 oriented level curve $L^{(2)}_{8,1.85}$ (solid line with arrows), together with points showing the cusp locations of $B^{(2)}_8$. The value $N=1.85$ is close to $N_\mathrm{min}(8)\cong 1.833$. (At first sight, it might appear that there are self-intersection points in the second and third quadrants, but a close focus shows this to be untrue.) The curve splits the plane into components, and the numbers ($0$, $1$, $2$, or $3$) shown in the figure are the $j_{8,1.85}^{(2)}(\rho)$ of each component.} \label{fig_5}
\end{figure}

Fig. \ref{fig_5} shows $L_{n,N}^{(k)}$ thus oriented for $k=2$, $n=8$, and $N=1.85$. The six points pictured in Fig.~\ref{fig_5} correspond to the six cusps of the borderline curve $B_n^{(2)}=L_{n,n}^{(2)}$. (Recall that each cusp point gives rise to a borderline/double eigenvalue equal to $-n$.) The value $N=1.85$ is slightly larger than $N_\mathrm{min}(8)\cong 1.833$; as a result, there are significant deviations from $B_n^{(2)}$.   

The curve $L_{n,N}^{(k)}$ splits the complex-$\rho$~plane into a number of disjoint components. We have labeled each component by its $j_{n,N}^{(k)}(\rho)$ (which is constant within the component, see Section 6.1 of \cite{FikMavr}).  It is seen that  $j_{n,N}^{(k)}(\rho)=0,1,2,$ or $3$, with the two components labeled $3$ being very small. We see, also, that any component containing a cusp point has $j_{n,N}^{(k)}(\rho)=2$; thus, at any cusp point $\rho_c$, all type-$2$ eigenvalues of $K_8(\rho_c)$ have magnitudes smaller than $N=1.85$, with the exception of the borderline/double eigenvalue (which has magnitude $n=8$ and is counted twice). 

The reader is invited to show that the $j_{n,N}^{(k)}(\rho)$ in Fig. \ref{fig_5} can be determined using the two rules that follow.\\

\noindent \underline{Rule 1:} The unique component that extends to infinity has $j_{n,N}^{(k)}(\rho)=j_{n,N}^{(k)}(\infty)=1$.\\

\noindent \underline{Rule 2:} Crossing 
$L_{n,N}^{(k)}$ via a non-self-intersection point of $L_{n,N}^{(k)}$ results in changing $j_{n,N}^{(k)}(\rho)$ by $1$, with the larger  $j_{n,N}^{(k)}(\rho)$ appearing: (i) on the right side of the curve for our choice of orientation; but (ii) on the left side of the curve for the opposite orientation.\\

Let us justify these rules. As $L_{n,N}^{(k)}$ is bounded, only one component extends to infinity. Rule 1 is true because $N>N_\mathrm{min}(n)>1$ by (\ref{eq:n0-smaller-n}); and because, as $|\rho|\to\infty$, exactly one type-$k$ eigenvalue is unbounded, while all other type-$k$ eigenvalues approach $-1$, see (6.6) of \cite{FikMavr}.

The change by $1$ specified by Rule~2 is evident via Theorem \ref{th:uniqueness}; see also Section 6.2 of \cite{FikMavr}. If we start in the unbounded component and cross the curve via the positive real axis, we enter the component containing the origin. This component has $j_{n,N}^{(k)}(\rho)=j_{n,N}^{(k)}(0)=0$ because $K_n(0)$ is the $n\times n$ identity matrix, whose eigenvalues are all smaller than $N$ by (\ref{eq:n0-smaller-n}) and $N>N_\mathrm{min}(n)$. For the provided orientation, therefore, crossing from right to left is accompanied by a \textit{decrease} in  $j_{n,N}^{(k)}(\rho)$, as stated in Rule~2. The reverse would be true if we had chosen the opposite orientation.

Evidently, our two rules are similar 
in nature to those used in computing winding numbers of closed, bounded, and oriented curves \cite{Roe}. The latter rules, which appear in the literature in various contexts \cite{Erickson, Carter}, are often referred to as Alexander numbering, the name originating from Alexander's 1928 paper \cite{Alexander}. More precisely, for the orientation provided in Fig. \ref{fig_5}, let $\mathrm{wind}\left(L_{n,N}^{(k)}, \rho\right)$ be the winding number of $L_{n,N}^{(k)}$ with respect to the point $\rho\notin L_{n,N}^{(k)}$.  At infinity, in place of Rule~1 we have $\mathrm{wind}\left(L_{n,N}^{(k)}, \infty\right)=0$. Furthermore, $\mathrm{wind}\left(L_{n,N}^{(k)}, \rho\right)$ obeys Rule~2 but with the larger of the two $\mathrm{wind}\left(L_{n,N}^{(k)}, \rho\right)$ appearing on the \textit{left} side of the curve  \cite{Roe}--\cite{Carter}. Consequently, $-\mathrm{wind}\left(L_{n,N}^{(k)}, \rho\right)$ obeys Rule~2. We have thus arrived at the simple relation  
\begin{equation}
\label{eq:winding-1}
j_{n,N}^{(k)}(\rho)=1-\mathrm{wind}\left(L_{n,N}^{(k)}, \rho\right).
\end{equation}

Numerical experiments (specifically, numerical computations of the eigenvalues) verified that our two rules---or their equivalent
 (\ref{eq:winding-1})---correctly gave the $j_{n,N}^{(k)}(\rho)$ for all the $L_{n,N}^{(k)}$ we generated via our algorithm. In particular, Rule~2 remains true when the non-self-intersection point is a cusp (recall that cusps appear only when $N=n$): as discussed in \cite{Fik2020} and Section~6 of \cite{FikMavr}, any cusp is associated with eigenvalue bifurcations, so that $j_{n,N}^{(k)}(\rho)$ changes by $1$ (rather than $2$, even if there is a double eigenvalue at the cusp). It is also easy to understand why $j_{n,N}^{(k)}(\rho)$ changes by $0$ or $2$ along a trajectory that passes through a self-intersection point, as one can see from Fig. \ref{fig_5}.

Needless to say, any Jordan curve (see Conjecture \ref{th:jordan}) separates the complex plane into two components, namely the interior and the exterior of $L_{n,N}^{(k)}$, with $j_{n,N}^{(k)}(\rho)=j_{n,N}^{(k)}(0)=0$ and $j_{n,N}^{(k)}(\rho)=j_{n,N}^{(k)}(\infty)=1$, respectively, with the value $1$ corresponding to the extraordinary eigenvalue mentioned in our Introduction.

\section{Extensions}

\subsection{An illustrative example}
\label{sec:example}
Via an application arising in physics \cite{Tanaka}, we now show that many of the behaviors exhibited by $K_n(\rho)$ also occur elsewhere. Our example is described by the cubic equation \cite{Tanaka}
\begin{equation}
\label{eq:cubic-eq}
\lambda(\lambda-\rho-\alpha^2)^2+\frac{\pi^2\alpha^4}{4}=0,\quad \alpha>0
\end{equation}
whose unknown is $\lambda$. Consistent with our notation for the Kac--Murdock--Szeg\H{o} matrix $K_n(\rho)$, we consider $\rho\in\mathbb{C}$ to be the varying parameter. As we will shortly discuss in more detail, our $\rho$ corresponds to the quantity $\epsilon_a\in\mathbb{R}$ of \cite{Tanaka}. 
The notation $\alpha$ (as well as the value $0.1$ to which we will soon fix $\alpha$) comes from \cite{Tanaka}.

Conditions for multiple zeros follow from the familiar procedure \cite{DLMF} of setting the discriminant of (\ref{eq:cubic-eq}) equal to zero and solving for $\rho$. We thus find that multiple zeros occur when $\rho$ is equal to one of the three critical points $\rho_c(m)$, where
\begin{equation}
\label{eq:cusp-locations}
\rho_c(m)=-\alpha^2-3\left(\frac{\pi\alpha^2}{4}\right)^\frac{2}{3}\exp\left({i\frac{2m\pi}{3}}\right),\quad m=-1,0,1,
\end{equation}
and that the corresponding zeros are double zeros equal to
\begin{equation}
\lambda_c(m)=-\left(\frac{\pi\alpha^2}{4}\right)^\frac{2}{3}\exp\left({i\frac{2m\pi}{3}}\right),\quad m=-1,0,1
\end{equation}
(see also \cite{Garmon}). It follows that the magnitudes of all three double zeros are equal to $N_0$, with
\begin{equation}
N_0=\left|\lambda_c(m)\right|=\left(\frac{\pi\alpha^2}{4}\right)^\frac{2}{3},\quad m=-1,0,1.
\end{equation}
\begin{figure}
\begin{center}
\includegraphics[scale=0.4]{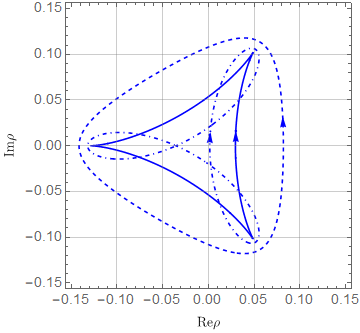}
\end{center}
\caption{Level curves $L_N$ for the three zeros of the cubic equation (\ref{eq:cubic-eq}) with $\alpha=0.1$. The levels of the three curves are $N=N_0$ (solid line), $N=0.5N_0$ (dashed line), and $N=1.4N_0$ (dot-dashed line). The arrows indicate curve orientations.} \label{fig_6}
\end{figure}

Let $\lambda$ denote any of the three solutions of (\ref{eq:cubic-eq}). The $N$-level curves (contour lines on which $|\lambda|=N$) can be numerically generated via the well-known closed-form expressions \cite{DLMF} for the zeros of cubic equations. For $\alpha=0.1$, Fig. \ref{fig_6} shows contour lines $L_{N}$ thus obtained for $N=N_0$ (solid line), $N=0.5N_0$ (dashed line), and $N=1.4N_0$ (dot-dashed line). 
The closed curves of Fig. \ref{fig_6}, which can be compared to those in Fig. \ref{fig:L5-1}, have the following features.\\

\noindent (i) The curve of level $N=N_0$ (but not the other two curves) exhibits three cusp-like singularities.  They occur at the double-eigenvalue positions, namely at the $\rho=\rho_c(m)$ of (\ref{eq:cusp-locations}). Therefore the borderline curve $B_n^{(k)}=L_{n,n}^{(k)}$ of $K_n(\rho)$ is analogous to the $N=N_0$ curve of Fig. \ref{fig_6}, with $n$ corresponding (in this respect) to the double-eigenvalue level $N_0$.\\

\noindent (ii) The curve of level $N=0.5N_0$ envelops the one of level $N=N_0$. Here, as opposed to Fig. \ref{fig:L5-1}, the $N$-value of the surrounding curve is \textit{smaller} than the $N$-value of the enclosed curve.\\

\noindent (iii) The non-Jordan curve (of level  $N=1.4N_0$) loops around the three cusps, similarly to the third curve (of level $N=3$) in Fig. \ref{fig:L5-1}.\\

\noindent (iv) As in Fig. \ref{fig:L5-1}, each curve in Fig. \ref{fig_6} separates the plane into components. Let $j_N(\rho)\in\{0,1,2,3\}$ be the number of zeros of (\ref{eq:cubic-eq}) whose magnitudes exceed $N$. As $|\rho|\to\infty$, the three zeros of (\ref{eq:cubic-eq}) approach $\rho$, $\rho$, and  $0$. Thus in place of Rule~1 of Section~\ref{sec: number-j} we have\\

\noindent \underline{Rule 1$^\prime$:} For each $N$ in Fig. \ref{fig_6}, the unbounded component has $j_N(\rho)=2$.\\

\noindent Let us provide the three curves with the orientations shown in Fig. \ref{fig_6}.
When $\rho=-\alpha^2$, all three zeros of (\ref{eq:cubic-eq}) have magnitudes $2^{2/3}N_0\cong 1.59 N_0$. Therefore for the three values of $N$ used in Fig. \ref{fig_6} we have $j_N(-0.01)=3$. Since the component containing $\rho=-0.01$ is always adjacent to the unbounded one, we can replace Rule~2 of Section~\ref{sec: number-j} by\\

\noindent \underline{Rule 2$^\prime$:} For the three curves of Fig. \ref{fig_6} (oriented as shown), crossing the curve from right to left results in increasing $j_N(\rho)$ by $1$.\\

\noindent (v) For the three oriented curves in Fig. \ref{fig_6}, the equation corresponding to (\ref{eq:winding-1}) is  
\begin{equation}
\label{eq:winding-2}
j_N(\rho)=\mathrm{wind}(L_N,\rho)+2
\end{equation}
where $\mathrm{wind}(L_N,\rho)$ is the winding number of $L_N$ (oriented as in the figure) about any point $\rho\notin L_N$. 

When applied to the three $L_N$ of Fig. \ref{fig_6}, eqn.  (\ref{eq:winding-2}) (which is tantamount to Rules 1$^\prime$ and 2$^\prime$) easily gives any desired $j_N(\rho)$. For example, it is apparent that any component of $L_{1.4N_0}$ that loops around a cusp-like singularity has $j_N(\rho)=1$.

\subsection{Physics of our example} Eqn. (\ref{eq:cubic-eq}) pertains \cite{Tanaka} to an open quantum system consisting of a discrete quantum state of energy $\epsilon_a$ (the symbol $\epsilon_a$ of \cite{Tanaka} corresponds to our $\rho$), coupled with a one-dimensional continuum state. Ref. \cite{Tanaka}, which assumes $\epsilon_a\in\mathbb{R}$, specifically studies the Time-Symmetry Breaking Phase Transition (TSBPT) caused by the non-linearity associated with a Van Hove singularity. Here, our generalization to complex $\epsilon_a$ amounts to introducing a loss (or gain, depending on the sign of $\mathrm{Im}\{\epsilon_a\}$) to the discrete quantum state, similarly to a lossy optical cavity.  As discussed in \cite{Garmon}, the cusp on the real axis corresponds to the bound state, whereas the other two cusps correspond to resonance and anti-resonance.

Contour lines such as the three in Fig. \ref{fig_6} consist of all complex-energy values for which an eigenvalue maintains a constant magnitude $N$. TSBPT occurs along any trajectory passing through one of the three critical values $\rho_c(m)$ given in (\ref{eq:cusp-locations}). This happens at any of the three cusp-like singularities of $L_{N_0}$; there,  \textit{two} coinciding eigenvalues equal $\lambda_c(m)$. Each point $\rho_c(m)$ is a branch point of the double-valued function associated with the two coalescing eigenvalues \cite{Fik2020, Tanaka}, so it is impossible to distinguish between the two when reversing the process (i.e., when one follows the opposite trajectory of  $\rho=\epsilon_a$ in the complex plane). Finally, moving from the interior of $L_{N_0}$ to the exterior via a cusp-like singularity is associated with eigenvalue bifurcations \cite{Fik2020, Tanaka}. 

Further discussions on the physics of our example and connections to the interesting case $\alpha=0$,
can be found in \cite{Garmon}, which has a slightly different notation than ours.

\subsection{An equivalent matrix problem; possible generalizations}
The third-degree equation (\ref{eq:cubic-eq}) is the characteristic equation $\mathrm{det}[\lambda I-M(\rho)]=0$ of many $3\times 3$ matrices $M(\rho)$, an example being
\begin{equation}
\label{eq:matrixdefinition}
M(\rho)=\begin{bmatrix}
\rho+\alpha^2 & 0 & -\frac{\pi\alpha^2}{2} \\
1& \rho+\alpha^2 & 1\\
1 & \frac{\pi\alpha^2}{2} & 0 \\
\end{bmatrix}
\end{equation} 
The matrix $M(\rho)$ is an \textit{analytic matrix function} in the sense that all matrix elements are complex-analytic functions of $\rho$. Near any critical point
$\rho_c(m)$,  furthermore, the two coalescing eigenvalues are associated with a Puiseux series consisting of powers of $[\rho-\rho_c(m)]^{1/2}$ \cite{Fik2020}.  For the critical point $\rho_c(0)\in\mathbb{R}$, the Puiseux series is discussed in \cite{Tanaka} and \cite{Garmon}; importantly, the coefficient of the square-root term (i.e., the coefficient of $[\rho-\rho_c(0)]^{1/2}$) is nonzero.

For a general class of analytic matrix functions, eigenvalue behaviors \textit{near} such critical points are discussed in detail in Sections~3 and~4 of \cite{Fik2020}, with the Puiseux series being the main tool of study. While the conclusions are directly relevant to the example analyzed herein,\footnote{Corollary 3.5 of \cite{Fik2020}, for example, tells us that the cusp-like singularities in Fig. \ref{fig_6} are true cusps.} ref. \cite{Fik2020} mainly focuses on eigenvalue bifurcations. 

We believe that the behaviors exhibited by the $N$-level curves of $K_n(\rho)$ (and $M(\rho)$) are quite general. It might be possible to extend the Puiseux-series techniques in Sections~3 and~4 of \cite{Fik2020} so as to study the \textit{local} behaviors (near cusps, loops, and the like) of $N$-level curves of the aforementioned class of analytic matrix functions.  Preliminary work, however, indicates that this is not a simple task.

The type-2 eigenvalue contour lines of $K_8(\rho)$ (Fig. \ref{fig_5}) and the eigenvalue contour lines of $M(\rho)$  (Fig. \ref{fig_6}) are closed curves that share two important common features: (i) they are continuous; and (ii) they can be oriented in a natural manner. The $2\times 2$ Kac--Murdock--Szeg\H{o} matrix 
\begin{equation}K_2(\rho)=
\begin{bmatrix}
1& \rho  \\
\rho& 1 \\
\end{bmatrix}
\end{equation}
whose eigenvalues are $1-\rho$ and $1+\rho$, tells us that neither of these features is necessarily true in more general cases. The eigenvalue contour lines of $K_2(\rho)$ are two circles which intersect when $N>1$, but which are disjoint when $N<1$. Therefore, (a) the eigenvalue level curves (contour lines) $L_N$ are discontinuous whenever $N<1$, and (b) there seems to be no straightforward and natural method of orienting the \textit{composite} level curve consisting of both circles. Note that difficulty (b) arises because here, we are not distinguishing between type-1 and type-2 eigenvalues; the composite level curve in Fig.~\ref{fig:Fig_b5k} [consisting of both the solid (type-1) and the dashed (type-2) curves]  also presents difficulty (b).

For certain types of matrices (or polynomials) for which the aforementioned conditions (i) and (ii) are satisfied, it is logical to expect that equations such as (\ref{eq:winding-1}) and (\ref{eq:winding-2})---which involve the winding numbers of oriented level curves $L_N$ in the complex-$\rho$ plane---remain valid. We feel this matter is worthy of a systematic and rigorous study: It would be interesting to find matrix classes for which such equations remain true, and to state the equations in general form. To this end, a good starting point is the argument principle, see any text on functions of a single complex variable, or see  the introductory discussions  \cite{Baker} of degree theory in two dimensions. For any fixed $\rho\notin L_N$, this principle can be used to express what we have called $j_N(\rho)$ as a contour integral over the circle $|\lambda|=N$ of the logarithmic derivative of the characteristic polynomial.

\bibliographystyle{amsplain}

\end{document}